\title{The Burau Representation and Shapes of Polyhedra}
\author{Ethan Dlugie}
\address{Department of Mathematics, University of California, Berkeley, CA 94720, USA}
\email{Dlugie.E@math.berkeley.edu}
\date{}
\begin{document}
\begin{abstract}
    We use a geometric approach to show that the reduced Burau representation specialized at roots of unity has another incarnation as the monodromy representation of a moduli space of Euclidean cone metrics on the sphere, as described by Thurston. Using the theory of orbifolds, we leverage this connection to identify the kernels of these specializations in some cases, partially addressing a conjecture of Squier. The 4-strand case is the last case where the faithfulness question for the Burau representation is unknown, a question that is related e.g. to the question of whether the Jones polynomial detects the unknot. Our results allow us to place the kernel of this representation in the intersection of several topologically natural subgroups of $B_4$.
\end{abstract}

\maketitle
    
\section{Introduction}\label{sec:Introduction}
    In this paper, we consider two representations of groups arising in low dimensional topology. First is the (reduced) Burau representation of braid groups $$\beta_n:B_n \to \operatorname{GL}_{n-1}(\mathbb {Z}[t^\pm])$$ that has been studied for almost a century \cite{Burau1935UberVerkettungen}. Second is a monodromy representation of punctured sphere mapping class groups coming from a geometric structure on the moduli space of Euclidean cone spheres, $$\rho_{\vec k} : \Mod(S_{0,m};\vec k) \to \operatorname{PU}(1,m-3),$$ as described by Thurston in \cite{Thurston1998ShapesSphere}. It has been found using algebraic techniques that these seemingly disparate representations are quite closely related in that the latter is, in a sense, a specialization of the former \cite{McMullen2013BraidTheory,Venkataramana2014MonodromyLine}. Our first theorem is a slight rephrasing of those results, which we will establish in this work via geometric means. See the beginning of \cref{sec:Cone metrics} for an introduction to the terminology of Euclidean cone metrics used in the following statement.
    
        \begin{theorem}[The Burau representation and polyhedra monodromy] \label{thm:commutative_diagram}
        Fix a choice of curvatures $\vec{k}$, which is to say a tuple of real numbers $\vec k = (k_1,\dotsc,k_m)$ with each $0 < k_i < 2\pi$ and $\sum_{i=1}^m k_i=4\pi$. Suppose further that $n$ of these curvatures are equal, say $k_1=\dotsb=k_n$ with $n \leq m-1$, and write $k_* \in (0,2\pi)$ for this common value. Set $q=\exp(i(\pi-k_*))$. Then the following diagram commutes
        \begin{center}
            \begin{tikzcd}
            B_n \arrow[d, "\iota_*"'] \arrow[rr, "\beta_n"] &                                      &[-20pt] \beta_n(B_n) \arrow[d, "\ev(-q)"] \arrow[r,phantom,"{\subset}"] & [-25pt] {\GL_{n-1}(\mathbb{Z}[t^\pm])}\\
            \Mod(S_{0,m};\vec k) \arrow[r, "\rho_{\vec k}"]       & {\operatorname{PU}(1,m-3)} \arrow[r,phantom,"{\subset}"] & \operatorname{PGL}_{m-2}(\mathbb C)    &                                      
            \end{tikzcd}
        \end{center}
        where $\beta_n$ is the $n$-strand (reduced) Burau representation, $\rho_{\vec k}$ is the monodromy representation of the moduli space of cone metrics, $\iota_*$ is the map on mapping class groups induced by an inclusion of an $n$-times marked disk $\iota:D_n \hookrightarrow S_{0,m}$, and $\ev(-q)$ is a slight alteration of an evaluation map to be defined in \cref{def:eval_of_ext}.
    \end{theorem}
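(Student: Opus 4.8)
The plan is to realize both representations in the statement as the monodromy action on the twisted (co)homology of a punctured sphere, so that commutativity of the square becomes a functoriality statement for the inclusion $\iota$ of configuration data. The organizing observation is the parameter identity $-q=-\exp(i(\pi-k_*))=\exp(i(2\pi-k_*))=\exp(-ik_*)$: the Burau evaluation parameter $-q$ is exactly the holonomy $\exp(-ik_*)$ that the flat structure underlying a Euclidean cone metric acquires around a cone point of curvature $k_*$. I would make this matching of local monodromies the backbone of the argument.

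First I would fix the homological model for the Burau side. Viewing $B_n=\Mod(D_n)$, let $\mathcal{L}_s$ be the rank-one local system on $D_n$ with monodromy $s$ around each marked point. It is classical that, in the standard basis of twisted $1$-cycles, the action of $B_n$ on $H_1(D_n;\mathcal{L}_s)$ is the reduced Burau representation evaluated at $t=s$; this module has rank $n-1$ (from $-\chi(D_n)$ for a nontrivial local system), matching the target $\GL_{n-1}$. Taking $s=-q=\exp(-ik_*)$ thus realizes $\ev(-q)\circ\beta_n$ as a homological monodromy. Next I would recast Thurston's construction in the same language. A cone metric on $S_{0,m}$ with curvatures $\vec k$ is encoded by the holomorphic $1$-form $\omega=\prod_j (z-p_j)^{-k_j/2\pi}\,dz$ on $\mathbb{CP}^1$, whose flat metric $|\omega|^2$ has cone angle $2\pi-k_j$ at $p_j$ and local monodromy $\exp(-ik_j)$; so $\omega$ is a section of the line bundle attached to the local system $\mathcal{L}_{\vec k}$ with these monodromies. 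The shape is recorded by the periods of $\omega$ over a basis of twisted cycles, and $\rho_{\vec k}$ is the monodromy of the Gauss--Manin connection on $H^1(S_{0,m};\mathcal{L}_{\vec k})$, of rank $m-2$, as the cone points move. Thurston's Hermitian form is the area pairing $\tfrac{i}{2}\int\omega\wedge\bar\omega$, which is the cup-product/intersection form on twisted cohomology; he shows it has signature $(1,m-3)$, giving the target $\operatorname{PU}(1,m-3)\subset\operatorname{PGL}_{m-2}(\mathbb{C})$ after projectivizing away the scaling of $\omega$.

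With both sides expressed as monodromy of periods over twisted cycles, commutativity reduces to functoriality of $\iota\colon D_n\hookrightarrow S_{0,m}$. Because the $n$ equal-curvature points are precisely the marked points inside the disk, the restriction of $\mathcal{L}_{\vec k}$ to $D_n$ has monodromy $\exp(-ik_*)=-q$ around each, i.e.\ it is $\mathcal{L}_{-q}$; this is the exact content of the parameter match, and it identifies the twisted cycles supported in $D_n$ with the cycles carrying the Burau action at $t=-q$. Since a braid $b\in B_n$ is supported in $D_n$ and fixes $S_{0,m}\setminus D_n$ pointwise, $\iota_*(b)$ moves only these interior cycles and acts trivially on the remaining periods; chasing this through the period identification yields $\rho_{\vec k}\circ\iota_*(b)=\ev(-q)\circ\beta_n(b)$ up to an overall scalar. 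That scalar is what projectivization removes, explaining the passage to $\operatorname{PU}$ and $\operatorname{PGL}$ and why $\ev(-q)$ must be the \emph{slight alteration} of the naive evaluation promised in \cref{def:eval_of_ext}, absorbing the determinantal normalization and the sign distinguishing $-q$ from $q$.

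I expect the main obstacle to be the second step: identifying Thurston's geometrically defined shape monodromy and area form with the algebraic Gauss--Manin monodromy and intersection pairing on twisted cohomology. Concretely, one must verify that a half-twist interchanging two of the equal cone points acts on the periods by the \emph{Picard--Lefschetz} formula, and that this reproduces the Burau generator $\beta_n(\sigma_i)$ at $t=-q$ after the chosen normalization. Pinning down the basis of twisted cycles, the branch conventions for $\omega$, and the resulting signs so that the abstract isomorphism of representations becomes the stated commuting square with the correct evaluation $\ev(-q)$ is where the genuine work lies.
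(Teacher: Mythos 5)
Your route---realizing both representations as monodromy on twisted (co)homology and reducing commutativity to functoriality of $\iota$---is genuinely different from the paper's proof. The paper never invokes local systems or Picard--Lefschetz: it computes directly, in the developing-map coordinates $[z_1:\dotsb:z_{m-2}]$ of \cref{subsec:coordinates}, how a half-twist $\iota_*(\sigma_i)$ transforms the developed image of a flat disk (a clockwise rotation by $k_*$ and a flip of the edge $z_i$), reads off the matrix $\rho_{\vec k}(\iota_*(\sigma_i))$, and checks on the Artin generators that it equals $\ev(-q)(\beta_n(\sigma_i))$. Your approach is essentially the algebraic one of McMullen and Venkataramana that the paper cites as its precursor, and it is viable in principle; in particular your parameter identification $-q=\exp(-ik_*)$ is exactly the right starting point.

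However, there is a genuine gap at the decisive step. You claim that because $b\in B_n$ is supported in $D_n$, the class $\iota_*(b)$ ``moves only these interior cycles and acts trivially on the remaining periods,'' so that the two sides agree ``up to an overall scalar'' removed by projectivization, and you read the ``slight alteration'' in $\ev(-q)$ as a sign/determinant normalization. That misidentifies what $\ev(-q)$ is. By \cref{def:affine_extension,def:eval_of_ext}, $\ev(-q)$ evaluates the \emph{affine extension} $\widetilde{A}=\left(\begin{smallmatrix}A & v(A)\\ 0 & 1\end{smallmatrix}\right)\oplus I_{m-2-n}$, whose off-diagonal column $v(\beta_n(b))$ is a crossed homomorphism that is nonzero in general. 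In your homological picture, $\iota_*(b)$ preserves the subspace of twisted cycles supported in $D_n$ (acting there by Burau at $t=-q$) and acts trivially only on the \emph{quotient} by this subspace: a cycle crossing $\partial D_n$ is sent to itself \emph{plus} a cycle inside $D_n$, so the action is block-triangular, not block-diagonal, and the coupling column is precisely $v$. The discrepancy is not scalar: as the paper notes in \cref{sec:conclusion}, for $m>n+1$ powers of the full twist satisfy $\beta(b)|_{t=-q}=I$ while $\ev(-q)(\beta_n(b))\ne I$ in $\PGL_{m-2}(\mathbb C)$, so the block-diagonal matrix your argument produces and the correct block-triangular one are different projective classes. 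As written, your proof establishes a diagram that is false whenever $m>n+1$ (it is correct only in the special case $m=n+1$, cf. \cref{rmk:factors_through_evaluation}). To close the gap you would need to compute the defect $b_*c-c$ for twisted cycles $c$ meeting the complement of $D_n$ and verify that it reproduces exactly the column $v(\beta_n(b))$ of \cref{def:affine_extension}.
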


    In the case where $m=n+1$ and $D_n$ is included into an $(n+1)$-times punctured sphere, the evaluation map mentioned in this theorem really is just an evaluation. This allows us to realize the ``specialized'' Burau representation $\beta(-q)$, where the formal variable $t$ is evaluated at a given unit complex number $-q$, as one of these polyhedral monodromy representations.
    \begin{corollary}\label{cor:commutative_diagram_for_specialization}
        Let $\vec k = (k_1,\dotsc,k_{n+1})$ be as in \cref{thm:commutative_diagram} with $k_*=k_1=\dotsb=k_n$. Write $q=\exp(i(\pi-k_*))$ Then the following diagram commutes
        \begin{center}
            \begin{tikzcd}
            B_n \arrow[d,  two heads, "\iota_*"] \arrow[rr, "\beta(-q)"] &                                      &[-20pt] \GL_{n-1}(\mathbb C) \arrow[d, two heads] \\
            \Mod(S_{0,n+1};\vec k) \arrow[r, "\rho_{\vec k}"]       & {\operatorname{PU}(1,n-2)} \arrow[r,phantom,"{\subset}"] & \operatorname{PGL}_{n-1}(\mathbb C)
            \end{tikzcd}
        \end{center}
        where $\Mod(S_{0,n+1};\vec k)$ is the subgroup of the mapping class group of the $(n+1)$-times punctured sphere that preserves the $(n+1)$-st point and may freely permute the other points.
        
        This yields a containment $\ker(\beta(-q)) \leq \ncl_{B_n}(\tilde S) \cdot \langle \tau_n \rangle$ where $\tilde S$ is a lift of a normal generatoring set for $\ker (\rho_{\vec k})$ and $\tau_n \in B_n$ is the full twist braid on $n$-strands that generates the center of the braid group.
    \end{corollary}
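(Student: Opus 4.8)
The plan is to read off the corollary's commutative square directly from \cref{thm:commutative_diagram} and then to obtain the containment by a short diagram chase, the only genuinely geometric input being the identification of $\ker(\iota_*)$. First I would specialize \cref{thm:commutative_diagram} to $m=n+1$. As remarked in the paragraph preceding the corollary, once $D_n$ is capped into an $(n+1)$-times punctured sphere the map $\ev(-q)$ on $\beta_n(B_n)$ is an honest evaluation $t\mapsto -q$, so the composite $\ev(-q)\circ\beta_n$ is exactly the specialized representation $\beta(-q)\colon B_n\to\GL_{n-1}(\mathbb C)$; composing with $\GL_{n-1}(\mathbb C)\twoheadrightarrow\PGL_{n-1}(\mathbb C)$ and transcribing the theorem's diagram gives the corollary's square. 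The extra fact I would record here is that $\iota_*$ is surjective onto the stated subgroup: this is the standard capping surjection from $\Mod(D_n)\cong B_n$ onto the mapping class group of the sphere obtained by collapsing $\partial D_n$ to the marked point $p_{n+1}$, fixing $p_{n+1}$ and freely permuting the remaining marked points.

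Next I would identify $\ker(\iota_*)$. The capping homomorphism fits into the exact sequence
\[
1 \longrightarrow \langle T_{\partial D_n}\rangle \longrightarrow \Mod(D_n) \xrightarrow{\ \iota_*\ } \Mod(S_{0,n+1};\vec k) \longrightarrow 1,
\]
whose kernel is generated by the Dehn twist about $\partial D_n$. Under $\Mod(D_n)\cong B_n$ this boundary twist is precisely the full twist $\tau_n=(\sigma_1\cdots\sigma_{n-1})^n$, the generator of $Z(B_n)$. Hence $\ker(\iota_*)=\langle\tau_n\rangle$.

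Finally comes the diagram chase. Let $b\in\ker(\beta(-q))$, so $\beta(-q)(b)$ is the identity matrix and therefore maps to the identity of $\PGL_{n-1}(\mathbb C)$. Commutativity of the square forces $\rho_{\vec k}(\iota_*(b))$ to be trivial, i.e.\ $\iota_*(b)\in\ker(\rho_{\vec k})$. Since $\iota_*$ is surjective and the image of a normal closure under a surjection is the normal closure of the image, $\iota_*\big(\ncl_{B_n}(\tilde S)\big)$ equals the normal closure of $\iota_*(\tilde S)$ in $\Mod(S_{0,n+1};\vec k)$, which is $\ker(\rho_{\vec k})$ because $\iota_*(\tilde S)$ normally generates it. Thus there is $w\in\ncl_{B_n}(\tilde S)$ with $\iota_*(w)=\iota_*(b)$, so $w^{-1}b\in\ker(\iota_*)=\langle\tau_n\rangle$; writing $w^{-1}b=\tau_n^{\,j}$ gives $b=w\,\tau_n^{\,j}\in\ncl_{B_n}(\tilde S)\cdot\langle\tau_n\rangle$, which is the asserted containment (and this product is genuinely a subgroup, indeed normal, because $\tau_n$ is central). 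Everything in this last step is formal; the one point I would take care with is the identification $\ker(\iota_*)=\langle\tau_n\rangle$, making sure the capping sequence is invoked with the convention that $\partial D_n$ is fixed pointwise while only the single marked point $p_{n+1}$ is required to be fixed on the sphere.
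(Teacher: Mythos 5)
Your proposal is correct and follows essentially the same route as the paper: it invokes the factorization of $\ev(-q)$ through the honest evaluation $t \mapsto -q$ when $m=n+1$ (the paper's \cref{rmk:factors_through_evaluation}), identifies $\ker(\iota_*) = \langle \tau_n \rangle$ via the capping surjection of \cite[Proposition 3.19]{Farb2012AGroups}, and then lifts normal generators of $\ker(\rho_{\vec k})$ along $\iota_*$, using centrality of $\tau_n$ to write the answer as $\ncl_{B_n}(\tilde S) \cdot \langle \tau_n \rangle$. Your diagram chase merely spells out in more detail the paper's statement that ``lifts differ by the kernel of $\iota_*$,'' so the two arguments are the same in substance.
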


    In the statement, and in the rest of the paper, the notation $\ncl_G(S)$ indicates the normal closure of a set $S$ inside of a group $G$. We will also write $\tau_p \in B_n$ for a full twist about a curve surrounding $p$ points in the $n$-punctured disk. Any two such twists are conjugate in the braid group.
    
    In his influential paper \cite{Squier1984TheUnitary}, Squier briefly considered the specializations of the Burau representation at roots of unity. He made a conjecture about the form that the kernels of such specializations would take. We cannot verify Squier's conjecture in the form that he stated it,\footnote{See \cref{sec:conclusion} for a discussion on this point.} but using \cref{cor:commutative_diagram_for_specialization}, we are able to identify the kernel of these specializations in several cases.
    
    \begin{theorem}[Burau at roots of unity]\label{thm:Burau_roots_of_unity}
        Let $q$ be a primitive $d$-th root of unity and let $\beta(-q):B_n \to \GL_{n-1}(\mathbb C)$ denote the specialization of the Burau representation at $t=-q$. Then we have 
        \begin{equation}
            \ker (\beta(-q)) = \ncl_{B_n}(\sigma^d,\tau_{n-1}^j) \cdot \langle \tau_n^\ell \rangle \label{eqn:kernel_in_special_cases}
        \end{equation}
        for the following values of $n,d,j,l$:

        \begin{center}
        
            
            \begin{tabular}{c||cccccccc|cccc|cc|cc|c|c|c}
                $n$ & \multicolumn{8}{c|}{$4$}                                    & \multicolumn{4}{c|}{$5$}          & \multicolumn{2}{c|}{$6$} & \multicolumn{2}{c|}{$7$}  & $8$ & $9$ & $10$ \\ \hline
                $d$ & $5$ & $6$ & $7$ & $8$ & $9$ & $10$ & $12$ & $18$ & $4$ & $5$ & $6$ & $8$ & $4$ & $5$ & $3$ & $4$ & $3$ & $3$ & $3$  \\
                $j$ & $\infty$ & $\infty$ & $14$ & $8$ & $6$ & $5$  & $4$  & $3$  & $\infty$ & $5$ & $3$ & $2$ & $4$  & $2$ & $\infty$ & $2$ & $6$ & $3$ & $2$  \\
                $l$ & $5$ & $3$ & $7$ & $4$ & $9$ & $5$ & $3$ & $9$ & $4$ & $2$ & $3$ & $8$ & $2$ & $5$ & $6$ & $4$ & $3$ & $2$ & $3$ 
            \end{tabular}
        \end{center}

    Here $\sigma \in B_n$ denotes one of the half-twist generators of $B_n$ (all of which are conjugate), $\tau_{n-1} \in B_n$ denotes a full twist on a curve surrounding $n-1$ points in the punctured disk (all of which are conjugate), and $\tau_n \in B_n$ denotes the full twist on the boundary of the punctured disk (which generates the center of $B_n$). In a case with $j=\infty$, we mean that the kernel is $\ncl (\sigma^d) \cdot \langle \tau_n^\ell \rangle$ with no power of $\tau_{n-1}$.
    \end{theorem}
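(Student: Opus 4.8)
The plan is to use Corollary~\ref{cor:commutative_diagram_for_specialization} to convert the problem of computing $\ker(\beta(-q))$ into a problem about the monodromy representation $\rho_{\vec k}$ of a moduli space of Euclidean cone metrics, and then to exploit the orbifold structure on that moduli space to identify $\ker(\rho_{\vec k})$ explicitly. Let me think carefully about how this would go.

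First, let me recall what the corollary gives us. We have a commutative diagram with $B_n \twoheadrightarrow \Mod(S_{0,n+1};\vec k)$ via $\iota_*$, and $\beta(-q)$ factoring through $\rho_{\vec k}$ up to the projectivization $\operatorname{PGL}_{n-1}$. The containment stated in the corollary is $\ker(\beta(-q)) \leq \ncl_{B_n}(\tilde S)\cdot\langle\tau_n\rangle$ where $\tilde S$ lifts a normal generating set for $\ker(\rho_{\vec k})$.

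So the structure of the final theorem is: the kernel should be $\ncl_{B_n}(\sigma^d, \tau_{n-1}^j)\cdot\langle\tau_n^\ell\rangle$. The form $\ncl(\sigma^d, \tau_{n-1}^j)$ strongly suggests that $\ker(\rho_{\vec k})$ is generated by cone points colliding—specifically, $\sigma^d$ corresponds to a pair of cone points rotating around each other $d$ times (a local rotation that becomes trivial when $q$ is a $d$-th root of unity), and $\tau_{n-1}^j$ corresponds to all but one cone point rotating $j$ times. The power $\tau_n^\ell$ is the central element, and its appearance comes from the fact that we're looking at a representation into $\operatorname{PGL}$ (projective), so $\tau_n$ acts as a scalar; $\ell$ is the order of that scalar.

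Key insight about the orbifold/Thurston picture: Thurston showed the moduli space of cone metrics with fixed curvatures $\vec k$ carries a complex hyperbolic cone-manifold (orbifold) structure when certain integrality conditions (the "orbifold conditions" $(k_i+k_j)/2\pi \in$ appropriate form, or equivalently $\pi - (k_i+k_j)/2 \in \frac{\pi}{n}\mathbb{Z}$) are satisfied. When these conditions hold, the metric completion is a complex hyperbolic orbifold, and the monodromy $\rho_{\vec k}$ is the holonomy of this orbifold.

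Let me think about which curvatures are being used. We have $q = \exp(i(\pi-k_*))$ a primitive $d$-th root of unity, so $\pi - k_* = \pm 2\pi/d$ (primitive), meaning $k_* = \pi \mp 2\pi/d$. With $n$ equal curvatures $k_* = k_1 = \dots = k_n$ and the $(n+1)$-st curvature $k_{n+1}$ determined by $\sum k_i = 4\pi$: so $k_{n+1} = 4\pi - n k_*$.

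Here is the proof strategy I would pursue.

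\textbf{Step 1: Identify $\ker(\rho_{\vec k})$ via the orbifold structure.} For the specific finite list of $(n,d)$ in the table, verify that the chosen curvatures satisfy Thurston's orbifold integrality conditions, so that the moduli space is a complex hyperbolic orbifold of finite volume (in fact, for these to work out, we'll need the relevant orbifold to be "good"). The monodromy $\rho_{\vec k}$ is then the orbifold fundamental group representation, so its image is a lattice in $\operatorname{PU}(1,n-2)$ and $\ker(\rho_{\vec k})$ is identified with the (orbifold) fundamental group kernel. The normal generators of $\ker(\rho_{\vec k})$ come from the cone-point loci where the orbifold has nontrivial local group: when two of the equal cone points collide, the local stabilizer is cyclic of some order, and the corresponding loop (a half-twist squared, i.e. $\sigma^2$ mapped down) raised to the appropriate power lands in the kernel. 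I expect $\sigma^d$ to come from the collision of two of the equal cone points (local cone angle relation forcing order $d$), and $\tau_{n-1}^j$ to come from the collision of all $n-1$ of a certain subset, reflecting a larger stabilizer.

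\textbf{Step 2: Lift to $B_n$ and account for the center.} Using Corollary~\ref{cor:commutative_diagram_for_specialization}, lift the normal generators of $\ker(\rho_{\vec k})$ to $B_n$ to get the containment $\ker(\beta(-q)) \leq \ncl_{B_n}(\sigma^d, \tau_{n-1}^j)\cdot\langle\tau_n\rangle$. Then determine the exact power $\ell$ of the central twist $\tau_n$: since $\beta(-q)(\tau_n)$ is a scalar matrix (the center maps to scalars under an irreducible-type representation), compute this scalar explicitly—it is $(-q)^{\text{something}}$ or a root of unity depending on $n$ and $d$—and $\ell$ is its multiplicative order. This pins down the central factor.

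\textbf{Step 3: Reverse containment.} Show $\ncl_{B_n}(\sigma^d,\tau_{n-1}^j)\cdot\langle\tau_n^\ell\rangle \leq \ker(\beta(-q))$ by direct computation: check $\beta(-q)(\sigma^d) = I$ (the eigenvalues of $\beta(-q)(\sigma)$ are roots of unity whose $d$-th powers are $1$), check $\beta(-q)(\tau_{n-1}^j)=I$, and check $\beta(-q)(\tau_n^\ell)=I$. Combined with Step 1 and Step 2, this gives equality.

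\textbf{The main obstacle.} The hard part will be Step 1—precisely matching the orbifold-theoretic kernel generators to the braid-group elements $\sigma^d$ and $\tau_{n-1}^j$, and verifying that for each entry in the table the relevant cone-metric moduli space is genuinely a \emph{good} orbifold with exactly the claimed local groups. This requires checking Thurston's integrality (cone-manifold) conditions case by case and correctly reading off the orders of the local stabilizers along each collision stratum; the finiteness of the table is itself a reflection of how restrictive these orbifold conditions are. A secondary subtlety is that the corollary only a priori gives one inclusion together with a central factor $\langle\tau_n\rangle$ of unknown order, so care is needed in Step 2 to compute the scalar $\beta(-q)(\tau_n)$ exactly and to confirm that no smaller normal subgroup than $\ncl(\sigma^d,\tau_{n-1}^j)$ already exhausts the kernel modulo the center; establishing the reverse containment in Step 3 is what rules this out and closes the argument.
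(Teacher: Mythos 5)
Your overall strategy is the same as the paper's: establish one containment by direct matrix computation, and get the other from \cref{cor:commutative_diagram_for_specialization} combined with Thurston's orbifold completions and the two lemmas of Cooper--Hodgson--Kerckhoff (\cref{lemma:completion_kernel,lemma:orbifolds_faithful}), then pin down the central power $\ell$ as the order of the scalar $\beta(-q)(\tau_n)=(-q)^n I$. However, your Step 1 contains a genuine misidentification at exactly the point where the matching of orbifold data to braid elements has to be made precise. You propose that $\tau_{n-1}^j$ arises from ``the collision of all $n-1$ of a certain subset, reflecting a larger stabilizer.'' This cannot work: by \cref{lemma:completion_kernel}, the kernel of $\pi_1^{orb}(\mathcal M(\vec k)) \twoheadrightarrow \pi_1^{orb}(\overline{\mathcal M}(\vec k))$ is normally generated by powers of meridian loops around the \emph{codimension-2} strata, which correspond to collisions of \emph{pairs} of cone points; a simultaneous collision of $n-1$ points would be a higher-codimension stratum, and moreover such a stratum typically does not even exist in the completion, since a group of points can only collide when their curvatures sum to less than $2\pi$, which fails for $n-1$ points of curvature $k_*=\pi\mp 2\pi/d$ in the relevant cases.

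The correct mechanism, which is the key geometric point of the paper's proof, is this: the second codimension-2 stratum is where one point of curvature $k_*$ collides with the single point of \emph{distinct} curvature $k_{n+1}$ (possible exactly when $k_*+k_{n+1}<2\pi$), and its meridian is the full twist $\tau$ about those two points, with cone angle $2\pi-(k_*+k_{n+1})=2\pi/j$. The identification with $\tau_{n-1}$ then comes from a curve isotopy on the sphere: a simple closed curve enclosing those two points also encloses the complementary $n-1$ marked points, all of which lie in the disk $D_n$, so under $\iota_*$ the twist $\tau$ lifts to $\tau_{n-1}\in B_n$ (this is the content of \cref{fig:the_last_twist_in_the_disk}). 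Without this observation your Step 1 produces the wrong normal generators and the argument does not close. Relatedly, your proposal never accounts for the $j=\infty$ entries of the table: these are precisely the cases $k_*+k_{n+1}\geq 2\pi$, where no second stratum is added in the completion and $\ker(\rho_{\vec k})=\ncl(\sigma^d)$ alone; this case distinction is needed for the statement as given. The remaining ingredients of your plan (faithfulness on the completed orbifold, lifting modulo the center, and the computational containment, including the affine-extension computation showing $\beta(\tau_{n-1}^j)$ evaluates to the identity) do align with the paper.
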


    We can also use the same method to identify the kernel of $\beta(-q)$ in all cases with $n=3$ and $d \geq 7$. The result is given in \cref{thm:Burau_3_at_roots_of_unity} and corrects the statement of \cite[Theorem 1.2]{Funar2014OnUnity}.

    Whether or not the Burau representation is faithful is a natural question to ask. At present, the answer is unknown only in the $n=4$ case, and this question has direct connections to the question of whether the Jones polynomial detects the unknot \cite{Bigelow2002DoesUnknot,Ito2015APolynomials}. An element of the kernel of $\beta_4$ must also lie in the kernel of every specialization. Thus \cref{thm:Burau_roots_of_unity} as a direct corollary restricts the kernel of $\beta_4$ to live in the intersection of several topologically natural normal subgroups of the braid group. One should note however that the intersection of these finitely many subgroups is still nontrivial by \cite[Lemma 2.1]{Long1986AGroups}, so this alone is not enough to establish faithfulness.
    
    \begin{corollary}[Narrowing $\ker (\beta_4)$] \label{cor:Burau_4_kernel}
        Let $\beta_4:B_4 \to \operatorname{GL}_3(\mathbb{Z}[t^\pm])$ denote the reduced Burau representation of the 4-strand braid group. Then
        \begin{align*}
            \ker (\beta_4 )& \leq \ncl_{B_4}(\sigma^d,\tau_3^\ell ) \cdot \langle \tau_4^\ell \rangle
        \end{align*}
        for powers $d,j,\ell$ as indicated in the table in \cref{thm:Burau_roots_of_unity}. All eight of these normal subgroups have infinite index in $B_4$.
    \end{corollary}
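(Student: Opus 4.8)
The plan is to derive Corollary~\ref{cor:Burau_4_kernel} as an essentially immediate consequence of Theorem~\ref{thm:Burau_roots_of_unity}, together with the elementary observation that the kernel of the full (unspecialized) Burau representation $\beta_4$ must be contained in the kernel of \emph{every} specialization $\beta(-q)$. First I would make this containment precise: any $b \in \ker(\beta_4)$ satisfies $\beta_4(b) = \operatorname{id}$ as a matrix over $\mathbb{Z}[t^\pm]$, so substituting $t = -q$ for any complex number $q$ yields $\beta(-q)(b) = \operatorname{id}$. In particular, taking $q$ to be any primitive $d$-th root of unity with $d \in \{5,6,7,8,9,10,12,18\}$ (the values appearing in the $n=4$ column of the table), we conclude $\ker(\beta_4) \leq \ker(\beta(-q))$ for each such $q$.

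Next I would invoke Theorem~\ref{thm:Burau_roots_of_unity} directly. For $n=4$ and each admissible $d$, that theorem identifies the kernel exactly as
\begin{equation*}
    \ker(\beta(-q)) = \ncl_{B_4}(\sigma^d, \tau_3^j) \cdot \langle \tau_4^\ell \rangle,
\end{equation*}
with $(d,j,\ell)$ read off from the table. Combining this with the containment from the previous step gives $\ker(\beta_4) \leq \ncl_{B_4}(\sigma^d,\tau_3^j)\cdot\langle\tau_4^\ell\rangle$ for each of the eight listed triples. (I note in passing that the statement of the corollary writes $\tau_3^\ell$ and $\tau_4^\ell$ where the table would give $\tau_3^j$ and $\tau_4^\ell$; I would align the notation with Theorem~\ref{thm:Burau_roots_of_unity} so the two exponents $j$ and $\ell$ are not conflated.) Since the containment holds simultaneously for all eight values of $d$, one may in fact intersect the eight subgroups, though the corollary as phrased only records membership in each.

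Finally I would establish the infinite-index assertion. Here the cleanest route is to pass to the image of each normal subgroup under a suitable quotient of $B_4$ where the index is visibly infinite; for instance, the abelianization or a small finite-image representation that distinguishes the normal closure from all of $B_4$. Concretely, the normal closure $\ncl_{B_4}(\sigma^d, \tau_3^j)$ lies in the kernel of a nontrivial specialized representation $\beta(-q)$ with infinite image, and since $B_4/\ker(\beta(-q))$ maps onto the infinite group $\beta(-q)(B_4)$, the subgroup must have infinite index. I expect this last point to be the only step requiring genuine (if brief) justification: one must confirm that each of the eight specializations $\beta(-q)$ has \emph{infinite} image in $\operatorname{GL}_3(\mathbb{C})$, so that the quotient $B_4/\ker(\beta(-q))$ is infinite and the kernel has infinite index. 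This should follow from the geometric description via $\rho_{\vec k}$ and the fact that the relevant Deligne--Mostow lattices in $\operatorname{PU}(1,2)$ are infinite, but it is the one place where I would cite or verify rather than simply quote Theorem~\ref{thm:Burau_roots_of_unity} verbatim.
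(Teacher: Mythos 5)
Your proposal is correct, and its first half is exactly the paper's argument: every element of $\ker(\beta_4)$ dies under every specialization $t\mapsto -q$, so the eight $n=4$ entries of \cref{thm:Burau_roots_of_unity} immediately give the containments (and your flag that the corollary's statement should read $\tau_3^j$ rather than $\tau_3^\ell$, matching the theorem, is right). For the infinite-index claim the two arguments rest on the same geometric fact but package it differently. The paper does not go through the image of $\beta(-q)$ at all: it observes that $B_4/\ncl_{B_4}(\sigma^d,\tau_3^j,\tau_4)$ is the orbifold fundamental group $\pi_1^{orb}(\overline{\mathcal M}(\vec k))$, which is infinite because Thurston's Theorem 0.2 makes $\overline{\mathcal M}(\vec k)$ a \emph{finite-volume} complex hyperbolic orbifold (a finite group cannot act on the infinite-volume space $\mathbb{CH}^{2}$ with finite-volume quotient), and the corollary's subgroup sits inside that kernel since $\langle\tau_4^\ell\rangle\subset\langle\tau_4\rangle$. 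You instead note that the corollary's subgroup \emph{equals} $\ker(\beta(-q))$ by \cref{thm:Burau_roots_of_unity}, so its index is the cardinality of the image $\beta(-q)(B_4)$, and you reduce to infinitude of the corresponding Deligne--Mostow lattice; via \cref{cor:commutative_diagram_for_specialization} the projectivized image is precisely $\rho_{\vec k}(\Mod(S_{0,5};\vec k))\cong\pi_1^{orb}(\overline{\mathcal M}(\vec k))$, so the fact you left as ``cite or verify'' is exactly the fact the paper supplies with Thurston's finite-volume theorem --- same input, slightly different routing. One caution: your passing suggestion that the abelianization or a ``small finite-image representation'' could exhibit infinite index is a dead end. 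The image of $\ncl_{B_4}(\sigma^d,\tau_3^j)\cdot\langle\tau_4^\ell\rangle$ in $H_1(B_4)\cong\mathbb Z$ contains $d\mathbb Z$ and so has finite index, and no representation with finite image can ever certify infinite index; this stray remark is harmless only because the concrete argument you give afterwards is the correct one.
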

    
    In fact all of the normal subgroups of braid groups given by \cref{thm:Burau_roots_of_unity} have infinite index in their respective braid groups. I comment on the relationship between this and some remarkable work of Coxeter \cite{Coxeter1959FactorGroup} in \cref{sec:conclusion}.
    
    \subsection*{Some history and context.} The question of the faithfulness of the Burau representation has persisted since the representation was first defined nearly a century ago \cite{Burau1935UberVerkettungen}. Faithfulness is easily shown for $n=2,3$ (see e.g. \cite[Theorem 3.15]{Birman1975BraidsAM-82}). Faithfulness for other cases remained open for several decades. Squier put forth two conjectures \cite[(C1) and (C2)]{Squier1984TheUnitary} that, if both true, would yield the faithfulness of the Burau representation. However, Moody found the Burau representation to be nonfaithful for $n \geq 10$ \cite{Moody1993TheRepresentation}, and this result was quickly lowered to $n \geq 6$ by Long and Paton \cite{Long1993The6}. A few years later, Bigelow found a simpler example of an element in the kernel of $\beta_6$ and furthermore found that the Burau representation is not faithful for $n=5$ \cite{Bigelow1999The5}. Funar and Kohno proved Squier's conjecture (C2) in \cite{Funar2014OnUnity}, so we know for all $n \geq 5$ that Squier's conjecture (C1) is false for almost all (even) values of $d$. At the time of writing of this article, the faithfulness question is only open in the $n=4$ case.
    
    Braid groups are already known to be linear by another representation, the Lawrence-Krammer representation. See \cite{Krammer2002BraidLinear} for an algebraic treatment of this result and \cite{Bigelow2000BraidLinear} for a topological proof. Yet the faithfulness of the Burau representation, especially in the $n=4$ case, is still of interest due to its connection with the Jones polynomial in knot theory. Nonfaithfulness of $\beta_4$ implies that the Jones polynomial fails to detect the unknot \cite{Bigelow2002DoesUnknot,Ito2015APolynomials}. There has been work on the $n=4$ question in the last few decades. For instance, a computer search by Fullarton and Shadrach shows that a nontrivial element in the kernel of $\beta_4$ would have to be exceedingly complicated \cite{Fullarton2019ObservedRepresentation}, suggesting faithfulness. On the other hand, Cooper and Long found that $\beta_4$ is not faithful when taken with coefficients mod 2 and with coefficients mod 3 \cite{Cooper1997AmathbbZ_2,Cooper1998OnPrime}.
    
    Thurston's work in \cite{Thurston1998ShapesSphere} was a geometric reframing of the monodromy of hypergeometric functions considered by Deligne and Mostow in \cite{Deligne1986MonodromyMonodromy}. The algebro-geometric approach to studying these monodromy representations has continued, notably in works such as \cite{McMullen2013BraidTheory} and \cite{Venkataramana2014MonodromyLine}. The analysis of Euclidean cone metrics on surfaces was extended by Veech \cite{Veech1993FlatSurfaces} and is still today an active area of research in low-dimensional topology and dynamical systems.
    
    \subsection*{Organization of the paper.} The rest of the paper is organized as follows.
    \begin{itemize}
        \item \Cref{sec:Cone metrics} introduces Euclidean cone metrics on the sphere, and we construct explicit complex projective coordinates on the moduli space.
        \item In \cref{sec:Same Reps}, we prove \cref{thm:commutative_diagram} and \cref{cor:commutative_diagram_for_specialization} that allow us to relate the Burau representation at roots of unity with the monodromy representation of the moduli spaces of Euclidean cone metrics. Our proof uses the complex projective coordinates defined in \cref{sec:Cone metrics}.
        \item In \cref{sec:Completion of M} we gather several results about the complex hyperbolic geometry of the moduli space and facts about geometric orbifolds.
        \item In \cref{sec:narrow kernel} we prove \cref{thm:Burau_roots_of_unity} identifying the kernel of the Burau representation at some roots of unity. This uses \cref{cor:commutative_diagram_for_specialization} with the results of \cref{sec:Completion of M}. We also present the application of these ideas to the $\beta_3$ case in \cref{subsec:Burau_3}.
        \item \Cref{sec:conclusion} contains a discussion of limitations of this work and several possible future directions and connections that I hope can spark further research with these techniques.
    \end{itemize}
    
    \subsection*{Acknowledgements} Substantial thanks to my advisor, Ian Agol, who first informed me about the connection between the Burau representation and the polyhedra monodromy of Thurston's work. Special thanks as well to both Nancy Scherich and Sam Freedman for repeated helpful conversations through early drafts about the framing and phrasing of these results. Thanks to many others for taking the time to read and comment on a preprint of this work. Thanks to Louis Funar and Toshitake Kohno for graciously accepting a correction to one of their previous statements. And finally, I thank the anonymous referee for a speedy review of and many influential comments on my initial submission. In particular, the referee taught me about the conjectures of Squier which have come to be a focal point of this work in its current form.

    The author's work was supported in part by a grant from the Simons Foundation (Ian Agol, \#376200).
    
\section{Euclidean Cone Metrics on \texorpdfstring{$S^2$}{the Sphere}}\label{sec:Cone metrics}
    Here we recall the moduli space of Euclidean cone metrics on the sphere. We describe local coordinates on the moduli space into complex projective space. The construction is used in the proof of \cref{thm:commutative_diagram} in \cref{sec:Same Reps}.

    Following \cite{Thurston1998ShapesSphere}, we consider Euclidean cone metrics on the sphere. Such a metric is flat everywhere on the sphere away from some number of singular cone points $b_1,\dotsc,b_m$. Around each cone point $b_i$ one sees some cone angle not equal to the usual $2\pi$ that one finds around a smooth point. Define the \emph{curvature} $k_i$ at $b_i$ to be the angular defect of the cone point. The Gauss-Bonnet theorem applies with this notion of curvature to give $\sum_{i=1}^m k_i = 4\pi$.
    
    Thurston considers only those cone metrics which are nonnegatively curved, i.e. all $k_i > 0$.\footnote{A theorem of Alexandrov implies that every such metric arises uniquely as the intrinsic length metric on the boundary of a convex polyhedron in Euclidean space.} Fixing a tuple of positive real numbers $\vec k=(k_1,\dotsc,k_m)$ with each $0<k_i<2\pi$ and $\sum_{i=1}^m k_i=4\pi$, Thurston considers the moduli space of Euclidean cone metrics on the sphere with curvatures $\vec k$ up to orientation-preserving similarity. We denote this space $\mathcal{M}(\vec k)$.
    
    
    There is a natural map from $\mathcal{M}(\vec k)$ to (a finite cover of) the usual moduli space of conformal structures on the punctured sphere by simply taking the conformal class of a flat cone metric. There is also an inverse map inspired by the Schwarz-Christoffel mapping of complex analysis. Thurston uses this idea to show that the moduli space $\mathcal{M}(\vec k)$ is actually orbifold-isomorphic to the moduli space of conformal structures on $m$-punctured spheres with punctures labeled by the $k_i$ \cite[Proposition 8.1]{Thurston1998ShapesSphere}. This more classical moduli space is a complex orbifold of dimension $m-3$. The orbifold fundamental group of the moduli space is $\Mod(S_{0,m};\vec k)$, the group of mapping classes of the $m$-punctured sphere that preserve the labeling by curvatures.
    
    In his paper, Thurston shows directly that the moduli space of cone metrics has complex dimension $m-3$ by giving local $\mathbb{CP}^{m-3}$ coordinates on $\mathcal{M}(\vec k)$ in terms of cocyles on the sphere with twisted/local coefficients. Schwartz gave a more geometric interpretation of these coordinates in \cite{Schwartz2015NotesPolyhedra}. In brief, we have the following:
    
    \begin{lemma}[\cite{Schwartz2015NotesPolyhedra}] \label{lemma:moduli_coordinates}
        The moduli space $\mathcal{M}(\vec k)$ of Euclidean cone metrics on the sphere with $m$ cone points of curvatures $k_1,\dotsc, k_m$ is a complex projective orbifold of dimension $m-3$.
    \end{lemma}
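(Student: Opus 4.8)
The plan is to recall and organize the construction, due to Thurston \cite{Thurston1998ShapesSphere} and given a more geometric form by Schwartz \cite{Schwartz2015NotesPolyhedra}, of complex projective charts on $\mathcal M(\vec k)$ coming from the developing map of the flat structure. Fix a Euclidean cone metric $\mu$ representing a point of $\mathcal M(\vec k)$ and let $S = S^2 \setminus \{b_1,\dots,b_m\}$ be the complement of the cone points. Developing the flat structure along paths from a basepoint produces a (multivalued) developing map $D \colon \widetilde S \to \mathbb C$ that is a local isometry, together with a holonomy representation $\rho \colon \pi_1(S) \to \mathbb C \rtimes \mathrm{U}(1)$ into the orientation-preserving isometries of the plane. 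Traversing a small loop around $b_j$ rotates the developing image by the cone angle $2\pi - k_j$, so the rotational part of the holonomy around $b_j$ is multiplication by $\zeta_j := e^{-i k_j}$; note that this matches the specialization value, since $-q = e^{i(2\pi - k_*)} = e^{-i k_*}$ in the notation of \cref{thm:commutative_diagram}.

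First I would reinterpret the derivative $\omega := dD$. This is a single-valued $\mathbb C$-valued $1$-form on $\widetilde S$ that transforms under the deck action by the rotational part of $\rho$, so it is a closed twisted cocycle and defines a class $[\omega] \in H^1(S;\mathcal L_{\vec k})$, where $\mathcal L_{\vec k}$ is the rank-one complex local system on $S$ with monodromy $\zeta_j$ around $b_j$. Because each $0 < k_j < 2\pi$, no $\zeta_j$ equals $1$, so the local system is nontrivial at every puncture; consequently $H^0(S;\mathcal L_{\vec k}) = H^2(S;\mathcal L_{\vec k}) = 0$, and since $\chi(S) = 2-m$ this forces $\dim_{\mathbb C} H^1(S;\mathcal L_{\vec k}) = m-2$. (There is a mild subtlety about whether one wants ordinary twisted cohomology or the image of the compactly supported group, controlling the behavior of $\omega$ at the punctures; for the local coordinate count the relevant space has dimension $m-2$.)

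Next I would account for the ambiguities in $D$ to isolate the genuine invariant of $\mu$. Replacing the developing map by a postcomposition with an orientation-preserving similarity $z \mapsto \lambda z + c$ (which is exactly the freedom in $D$ together with the choice of representative of the similarity class) sends $\omega$ to $\lambda\,\omega$ and leaves the translation part invisible, so the well-defined datum is the projective line $[\omega] \in \mathbb P H^1(S;\mathcal L_{\vec k}) \cong \mathbb{CP}^{m-3}$. Over a simply connected neighborhood in moduli space, where $\mathcal L_{\vec k}$ can be trivialized, this gives a locally defined map $\mathcal M(\vec k) \to \mathbb{CP}^{m-3}$. To see these are genuine coordinates I would invoke the Schwarz--Christoffel-type reconstruction: a twisted cohomology class sufficiently near $[\omega]$ integrates to a developing map of an honest cone sphere with the prescribed curvatures $\vec k$, producing a local inverse; positivity of the cone angles is an open condition, which is what makes the inverse defined on a neighborhood. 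This identifies the period map as a local homeomorphism onto an open subset of $\mathbb{CP}^{m-3}$, giving simultaneously the dimension $m-3$ and the projective charts.

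Finally, for the projective and orbifold structure I would observe that passing between two overlapping charts amounts to parallel transporting the fiber $H^1(S;\mathcal L_{\vec k})$ along a path in $\mathcal M(\vec k)$, i.e. applying the linear monodromy of the bundle $R^1\pi_* \mathcal L_{\vec k}$; after projectivizing, the transition maps therefore lie in $\operatorname{PGL}_{m-2}(\mathbb C)$, which is precisely the projective structure (refining to $\operatorname{PU}(1,m-3)$ via the area pairing) appearing in \cref{thm:commutative_diagram}. The orbifold points come from cone metrics with extra symmetry, equivalently from the permutation stabilizers when some curvatures coincide, and these receive the evident quotient orbifold charts. I expect the main obstacle to be the Schwarz--Christoffel reconstruction in the third step: one must verify both that nearby twisted cocycles integrate to developing maps whose holonomy closes up to exactly the prescribed cone angles, and that the resulting object is an embedded cone sphere rather than an immersed or branched surface, so that the period map is genuinely a local homeomorphism with image consisting of honest metrics.
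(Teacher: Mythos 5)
Your proposal is correct in outline, but it takes a genuinely different route from the paper: it is essentially Thurston's original twisted--cohomology argument, whereas the paper deliberately adopts Schwartz's more elementary geometric construction. The paper's version of the proof is the construction in \cref{subsec:coordinates}: choose a disk $D \subset S_{0,m}$ whose boundary carries the cone points $b_1,\dotsc,b_{m-1}$, develop $f(D)$ isometrically into $\mathbb{C}$, and record the successive differences $z_i = d(b_{i+1}) - d(b_i)$, $i=1,\dotsc,m-2$; taking differences kills the translation ambiguity and projectivizing kills the rotation/scaling ambiguity, giving $\operatorname{dev}:\mathcal{T}(\vec k) \to \mathbb{CP}^{m-3}$, which then descends to moduli space. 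Your coordinates and the paper's are the same structure in different clothing: the $z_i$ are precisely the periods of your twisted form $\omega = dD$ along a basis of arcs joining consecutive cone points, so your class $[\omega] \in \mathbb{P}H^1(S;\mathcal{L}_{\vec k})$ is the frame-free version of the paper's $[z_1:\dotsb:z_{m-2}]$. What your approach buys: the dimension count $\dim_{\mathbb{C}} H^1(S;\mathcal{L}_{\vec k}) = m-2$ falls out of an Euler characteristic computation, the projective-linearity of the transition maps is conceptually transparent (monodromy of a flat bundle of cohomology groups), and your observation that the puncture monodromy $e^{-ik_*}$ equals $-q$ cleanly explains where the specialization value in \cref{thm:commutative_diagram} comes from. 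What the paper's approach buys: completely explicit coordinates in which a half-twist acts by an explicit matrix, which is exactly what the computation in the proof of \cref{thm:commutative_diagram} requires; that calculation would be clumsier in the invariant cohomological language. Two small caveats on your sketch: the reconstruction step you flag (nearby twisted classes integrate to honest cone metrics, so the period map is a local homeomorphism) is indeed the analytic heart, but it is Thurston's theorem and neither the paper nor Schwartz reproves it --- the paper likewise cites the lemma rather than proving it from scratch, so you are not at a disadvantage there. Also, your parenthetical identifying orbifold points with ``permutation stabilizers when some curvatures coincide'' is slightly off: an orbifold point is any metric with a nontrivial stabilizer in $\Mod(S_{0,m};\vec k)$, and such an isometric mapping class need not permute any cone points; what matters is only that the action on Teichm\"uller space is properly discontinuous with finite stabilizers.
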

    
    Taking a similar approach to Schwartz, we describe more concrete complex projective coordinates. This specific construction is for the sake of our calculation in the proof of \cref{thm:commutative_diagram}.
    
    \subsection{Local \texorpdfstring{$\mathbb{CP}^{m-3}$}{complex projective} coordinates}\label{subsec:coordinates}
    
    A $\mathbb{CP}^{m-3}$ structure on moduli space is encoded by a developing map $$\operatorname{dev}:\mathcal{T}(\vec k) \to \mathbb{CP}^{m-3}.$$ Here $\mathcal{T}(\vec k)$ is the Teichm\"uller space of Euclidean cone metrics on the sphere up to scaling with curvatures $\vec k$, which is the universal cover of the moduli space $\mathcal{M}(\vec k)$. We describe the map $\operatorname{dev}$ in a few steps.
    
    \begin{description}
        \item[A point in Teichm\"uller space] is a flat cone sphere $X$ and an isotopy class of homeomorphisms $f:S_{0,m} \to X$ from the $m$-times marked sphere $S_{0,m} = (S^2,\{b_1,\dotsc,b_m\})$ such that $f(b_i) \in X$ is a cone point of curvature $k_i$. The isotopy is taken relative to the cone points, and the metric on $X$ is only considered up to scaling.
        
        \item[A Euclidean developing] arises as follows. There is a disk $D \subset S_{0,m}$ such that $D \cap \{b_1,\dotsc,b_m\} = \partial D \cap \{b_1,\dotsc,b_m\} = \{b_1,\dotsc,b_{m-1}\}$, and $b_1$ through $b_{m-1}$ are arranged counterclockwise in order around $\partial D$. Abusing notation, we also write $b_1,\dotsc,b_m$ for the images of these points in $X$. Since $f(D) \subset X$ has no cone points in its interior and is simply connected, it has an isometric immersion (a developing map) to the Euclidean plane $d:f(D) \to \mathbb{E}^2$. This map is only defined up to orientation preserving planar isometries.
        
        \item[Complex coordinates] come from the positions of the cone points in $\mathbb{E}^2 \approx \mathbb{C}$, namely the tuple $(d(b_1),\dotsc,d(b_{m-1})) \in \mathbb{C}^{m-1}$. These points are well-defined up to isotopy of $f$, since the isotopy is relative to the cone points. If we instead record the differences $z_i=d(b_{i+1})-d(b_i)$, $i=1,\dotsc,m-2$, then we get a point $(z_1,\dotsc,z_{m-2}) \in \mathbb{C}^{m-2}$ that is well-defined up to a translation of $d$. The last coordinate $z_{m-1}=d(b_1)-d(b_{m-1})$ can be recovered from the rest. See \cref{fig:developing_cone_sphere}.
        
        \item[Finally, projectivization] is required because we can modify $d$ further by a rotation or real scaling ($X$ was only defined up to scaling). This means we have to mod out by the action of $\mathbb{C}^\times$, which is to say we get a well-defined point $[z_1:\dotsb:z_{m-2}] \in \mathbb{CP}^{m-3}$.
    \end{description}
    
    \begin{figure}
            \centering
            \def\svgwidth{\columnwidth}
            \import{Diagrams}{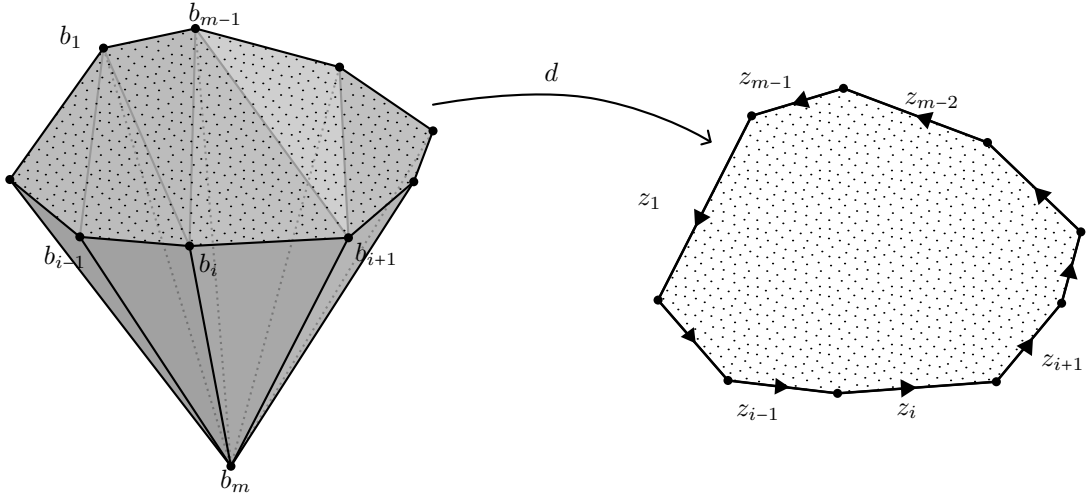}
            \caption{A marked Euclidean cone metric on the sphere and the complex coordinates measured from the boundary of the developing image of a flat disk in the sphere.}
            \label{fig:developing_cone_sphere}
    \end{figure}
    
    All of the choices in the construction are accounted for, and we get a well defined assignment $(X,[f]) \mapsto [z_1:\dotsb:z_{m-2}]$. This defines $\operatorname{dev}:\mathcal{T}(\vec k) \to \mathbb{CP}^{m-3}$ and descends to a complex projective structure on moduli space.

\section{The Burau Representation and Polyhedra Monodromy}\label{sec:Same Reps}
    In this section we show that specializing the Burau representation to roots of unity yields (a portion of) the monodromy representation of Thurston's moduli space of Euclidean cone metrics.
    
    Considering cone metrics with curvatures $\vec k = (k_1,\dotsc,k_m)$, the mapping class group of interest is the subgroup of the $m$-punctured sphere mapping class group which preserves the labeling of points by their curvatures. Points labelled with the same curvature may be interchanged. We denote this group by $\Mod(S_{0,m};\vec k)$.
    
    If we have $k_1=\dotsb=k_n$ with $n \leq m$, then there is an action of the $n$-strand braid group $B_n \to \Mod(S_{0,m};\vec k)$. \Cref{thm:commutative_diagram} says that something akin to a specialization of the Burau representation appears in the monodromy representation of $\Mod(S_{0,m};\vec k)$ from Thurston's work. To connect the representations in the general case, which involves mapping between matrix groups of different dimensions, we need to define the evaluation map $\GL_{n-1}(\mathbb Z[t^\pm]) \to \PGL_{m-2}(\mathbb C)$ used in the statement of the theorem.

    \begin{definition}\label{def:affine_extension}
        Define a map $v:\GL_{n-1}(\mathbb Z[t^\pm]) \to (\mathbb Z(t))^{n-1}$ to the free module over the field of rational functions by sending a matrix $A$ to the column vector $$v(A) = \frac{1}{1-t^n} (I_{n-1}-A) \cdot \left( 1-t, 1-t^2,\dotsc,1-t^{n-1} \right)^\top.$$
        One can show that this map is a \textit{crossed homomorphism}.\footnote{The derivation of the formula for $v(A)$ comes from our understanding of the topological meaning behind the Burau representation. To keep the topic of this work contained, we simply present the formula here and comment on its utility in the course of the proof of \cref{lemma:Burau_lands_in_Laurent}.} We use it to define an \textit{affine extension} $\widetilde{(-)}:\GL_{n-1}(\mathbb Z[t^\pm]) \to \GL_n(\mathbb Z(t))$ by
        \begin{equation*}
            \widetilde A = \left( \begin{array}{c | c}
                A & v(A) \\ \hline 0 & 1
            \end{array} \right)
        \end{equation*}
        One can show that this map is a group homomorphism.
    \end{definition}

    We would like to evaluate this matrix at a complex number, but it might be possible that one of the entries of $v(A)$ is undefined at the desired evaluation. Conveniently, this is never the case for matrices in the image of the Burau representation.

    \begin{lemma}\label{lemma:Burau_lands_in_Laurent}
        For any $A \in \beta_n(B_n)$ one has $\widetilde{A} \in \GL_n(\mathbb Z[t^\pm])$, i.e. the matrix $\widetilde A$ has coefficients in the ring of Laurent polynomials rather than the field of rational functions.
    \end{lemma}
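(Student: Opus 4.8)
\emph{Proof proposal.} The plan is to observe that the only obstruction to $\widetilde A$ lying in $\GL_n(\mathbb Z[t^\pm])$ is the entrywise integrality of the vector $v(A)$: the block $A$ already has Laurent-polynomial entries, the bottom row is $(0,\dots,0,1)$, and $\det \widetilde A = \det A$ is a unit in $\mathbb Z[t^\pm]$ (the reduced Burau matrices have determinant $\pm t^k$). Thus everything reduces to showing that the numerator $(I_{n-1}-A)\cdot(1-t,\dots,1-t^{n-1})^\top$ is divisible by $1-t^n$ inside $\mathbb Z[t^\pm]$. Writing $w=(1-t,\dots,1-t^{n-1})^\top$, I must show $1-t^n \mid (I_{n-1}-A)w$.

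To prove this divisibility I would argue root by root. Since $1-t^n=\prod_{d\mid n}\Phi_d(t)$ is a product of distinct cyclotomic polynomials, each irreducible and primitive over $\mathbb Z$, and $\mathbb Z[t^\pm]$ is a UFD, a Laurent-polynomial vector is divisible by $1-t^n$ as soon as it vanishes at every $n$-th root of unity; Gauss's lemma then guarantees the quotient again has integer coefficients, so the divisibility takes place in $\mathbb Z[t^\pm]$ and not merely in $\mathbb Q(t)$. Hence it suffices to check, for each $\zeta$ with $\zeta^n=1$, that $(I_{n-1}-A)\,w$ vanishes at $t=\zeta$, i.e. that $w(\zeta)$ is a fixed vector of the specialized matrix $A=\beta_n(b)(\zeta)$ for every $b \in B_n$ simultaneously.

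This common fixed vector is exactly where the topological meaning of the formula enters. Recall that the unreduced Burau representation $\bar\beta_n$ globally fixes the column vector $\mathbf v=(1,t,\dots,t^{n-1})^\top$ and the row covector $\mathbf 1 = (1,\dots,1)$, and that the reduced representation $\beta_n$ is its restriction to the invariant hyperplane $\ker\mathbf 1=\{\sum_i x_i=0\}$ expressed in the basis $f_i=e_i-e_{i+1}$. For $\zeta\neq 1$ with $\zeta^n=1$ one computes $\mathbf 1\cdot\mathbf v(\zeta)=\tfrac{1-\zeta^n}{1-\zeta}=0$, so the otherwise transverse fixed line $\langle \mathbf v(\zeta)\rangle$ degenerates into the reduced subspace; rewriting $\mathbf v(\zeta)$ in the $f_i$-coordinates yields precisely $\tfrac{1}{1-\zeta}\,w(\zeta)$. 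Being a scalar multiple of the restriction of a globally $\bar\beta_n$-fixed vector, $w(\zeta)$ is fixed by every $\beta_n(b)(\zeta)$, which is what we needed; the degenerate case $\zeta=1$ is automatic since $w(1)=0$.

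The step I expect to require the most care is matching conventions so that the fixed vector comes out to be literally $w=(1-t,\dots,1-t^{n-1})^\top$ rather than some other normalization --- this is exactly the calibration that motivates the precise form of $v(A)$ --- together with the Gauss's-lemma integrality upgrade. As a more computational alternative, one could instead note that because $\widetilde{(-)}$ is a group homomorphism the set of $A\in\GL_{n-1}(\mathbb Z[t^\pm])$ with $\widetilde A\in\GL_n(\mathbb Z[t^\pm])$ is a subgroup, and then verify the claim by hand on the finitely many generators $\beta_n(\sigma_i)$; but the root-of-unity argument above is cleaner and simultaneously explains the origin of the formula.
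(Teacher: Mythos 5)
Your proposal is correct, but your primary argument is genuinely different from the paper's proof; the paper's proof is in fact your ``computational alternative.'' The paper simply computes $v(\beta_n(\sigma_i))$ on the Artin generators (getting $0$ for $i \leq n-2$ and $(0,\dotsc,0,1)$ for $i=n-1$, so that the affine extension of $\beta_n(\sigma_i)$ is just $\beta_{n+1}(\sigma_i)$), and then invokes the homomorphism property of $\widetilde{(-)}$ together with the fact that the preimage of the subgroup $\GL_n(\mathbb Z[t^\pm]) \leq \GL_n(\mathbb Z(t))$ is a subgroup containing those generators. Your root-of-unity argument instead proves divisibility of $(I_{n-1}-A)w$ by $1-t^n$ uniformly for every $A$ in the image at once: since $1-t^n$ is squarefree and primitive, it suffices (by Gauss's lemma) to check vanishing at each $\zeta$ with $\zeta^n=1$, and there $w(\zeta)=(1-\zeta)\mathbf v(\zeta)$ in the $f_i$-coordinates is the specialization of the global fixed vector of the unreduced representation, which for $\zeta \neq 1$ degenerates into the reduced subspace $\ker \mathbf 1$. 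Your convention worry resolves favorably: restricting the unreduced matrices $\sigma_i \mapsto I_{i-1}\oplus \left(\begin{smallmatrix} 1-t & 1 \\ t & 0\end{smallmatrix}\right) \oplus I_{n-i-1}$ to $\ker\mathbf 1$ in the basis $f_i = e_i - e_{i+1}$ reproduces exactly the reduced matrices displayed in the paper, and the coefficients of $\mathbf v(\zeta)$ in that basis are $\frac{1-\zeta^j}{1-\zeta}$, as you claim (the last-coordinate consistency using precisely $\zeta^n=1$). What each approach buys: the paper's proof is shorter, self-contained, and yields the remark that the affine extension turns $\beta_n$ into $\beta_{n+1}$ on generators (conceptually relevant to the evaluation map later); yours requires the unreduced/reduced dictionary but avoids any generator computation and makes explicit the ``topological meaning'' that the paper relegates to a footnote as the origin of the formula for $v(A)$ --- it explains \emph{why} the formula works rather than just verifying that it does.
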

    \begin{proof}
        Recall (e.g. \cite[Chapter 3]{Birman1975BraidsAM-82}) that under the reduced Burau representation, the half-twist generator $\sigma_i$ acts as
        \begin{equation*}
            \beta_n(\sigma_i) = I_{i-2} \oplus \begin{pmatrix}
                1 & 0 & 0 \\
                t & -t & 1\\
                0 & 0 & 1
            \end{pmatrix}
            \oplus I_{n-i-2}
        \end{equation*}
        for $1 < i < n-1$ while
        \begin{equation*}
            \beta_n(\sigma_1) = \begin{pmatrix} -t & 1 \\ 0 & 1 \end{pmatrix} \oplus I_{n-3} \quad \text{and} \quad
            \beta_n(\sigma_{n-1}) = I_{n-3} \oplus \begin{pmatrix} 1 & 0 \\ t & -t \end{pmatrix}.
        \end{equation*}
        A straightforward computation gives that $v(\beta_n(\sigma_i)) = (0,\dotsc,0,0)$ for $i=1,\dotsc,n-2$, and $v(\beta_n(\sigma_{n-1})) = (0,\dotsc,0,1)$. We remark that the effect of the affine extension then is nothing more than to take the matrix $\beta_n(\sigma_i) \in \GL_{n-1}(\mathbb Z[t^\pm])$ to the matrix $\beta_{n+1}(\sigma_i) \in \GL_n(\mathbb Z[t^\pm])$.
        
        So the lemma holds for each $\beta_n(\sigma_i)$. Since $\GL_n(\mathbb Z[t^\pm])$ is a subgroup of $\GL_n(\mathbb Z(t))$ and the $\beta_n(\sigma_i)$ generate the image $\beta_n(B_n)$, the lemma follows.
    \end{proof}

    Since the matrices in the image of the Burau representation have Laurent polynomial coefficients, we can specialize the formal variable $t$ to any nonzero complex number and obtain a matrix with complex coefficients.

    \begin{definition}\label{def:eval_of_ext}
        Let $-q \in \mathbb C$ be a nonzero complex number and let $m \geq n+1$. Define the map $\ev(-q):\beta_n(B_n) \to \PGL_{m-2}(\mathbb C)$ by the composition
        \begin{equation*}
            \beta_n(B_n) \xrightarrow{\widetilde{(-)}} \GL_n(\mathbb Z[t^\pm]) \xrightarrow{(-) \oplus I_{m-2-n}} \GL_{m-2}(\mathbb Z[t^\pm]) \xrightarrow{t \mapsto -q} \GL_{m-2}(\mathbb C) \twoheadrightarrow \PGL_{m-2}(\mathbb C).
        \end{equation*}
        In the case $m=n+1$, we interpret the second map $(\text{--}) \oplus I_{-1}$ as deleting the last row and column of a matrix.
    \end{definition}

    \begin{remark}\label{rmk:factors_through_evaluation}
        In the case $m=n+1$, the second map serves to undo the effect of the first. Thus the map $\ev(-q)$ factors through the evaluation of the Burau representation:
        \begin{equation*}
            \beta_n(B_n) \subset \GL_{n-1}(\mathbb Z[t^\pm]) \xrightarrow{t \mapsto -q} \GL_{n-1}(\mathbb C) \twoheadrightarrow \PGL_{n-1}(\mathbb C).
        \end{equation*}
        This observation is the basis of the proof of \cref{cor:commutative_diagram_for_specialization}.
    \end{remark}

    Now we have the algebraic setup required to prove \cref{thm:commutative_diagram}.
    
    \begin{proof}[Proof of \cref{thm:commutative_diagram}]
        We prove the theorem by verifying that the diagram
        \begin{center}
            \begin{tikzcd}
                B_n \arrow[d, "\iota_*"'] \arrow[rr, "\beta_n"] &                                      &[-20pt] {\beta_n(B_n)} \arrow[d, "\ev(-q)"] \\
                \Mod(S_{0,m};\vec k) \arrow[r, "\rho_{\vec k}"]       & {\operatorname{PU}(1,m-3)} \arrow[r,phantom,"{\subset}"] & \operatorname{PGL}_{m-2}(\mathbb C)                                          
            \end{tikzcd}
        \end{center}
        commutes on a generating set for the braid group. In particular, we will use the $n-1$ Artin generators $\sigma_1,\dotsc,\sigma_{n-1} \in B_n \approx \Mod(D_n)$ that swap two points in the $n$-times marked disk with a counterclockwise half-twist.
        
        Label the marked points of $S_{0,m}$ as $b_1,\dotsc,b_m$ and consider cone metrics which have curvature $k_i$ at $b_i$ for each $i$. The $n$ points $b_1,\dotsc,b_n$ can be freely permuted by mapping classes since they have the same curvature $k_*=k_1 = \dotsb = k_n$, and so there is an inclusion of surfaces $\iota:D_n \hookrightarrow S_{0,m}$ sending the $n$ marked points of a marked disk $D_n$ to the points $b_1,\dotsc,b_n$. We write $\sigma_i$ for the generator of $B_n$, the counterclockwise half-twist on $D_n$ that swaps the points $\iota^{-1}(b_i)$ and $\iota^{-1}(b_{i+1})$.
        
        
        Now we compute the action of $\iota_*(\sigma_i)$ on the complex projective coordinates as constructed in \cref{subsec:coordinates}. Recall that our construction was to take a disk in a cone sphere $X$ on whose boundary lay the cone points $b_1,\dotsc,b_{m-1}$ and to look at the differences $z_1,\dotsc,z_{m-2}$ between those points under a developing map to $\mathbb C$. Applying a half twist $\iota_*(\sigma_i)$ changes that disk as indicated in the top of figure \cref{fig:polyhedra_computation}, and we consult the bottom of the figure for the effect on the developing image. The computation is done for one particular choice of homotopy class of disk in the cone sphere, but the induced action on the coordinates is independent of the chosen disk.
        
        The arc between $b_i$ and $b_{i+1}$ is taken to itself with reverse orientation on the cone sphere, but from within the disk it is approached from the opposite side, around the cone point $b_i$. Thus the developing image of this arc is rotated clockwise by the angle deficit/curvature at $b_i$, which is $k_*$, and is then reflected. This means $z_i$ is mapped to $-\exp(-ik_*)z_i = \exp(i(\pi-k_*))z_i$ under the monodromy representation of $\iota_*(\sigma_i)$. Similarly, $z_{i-1}$ maps to $z_{i-1}-\exp(i(\pi-k_*))z_i$, and $z_{i+1}$ maps to $z_i+z_{i+1}$. All other coordinates are fixed.
        
        \begin{figure}
            \centering
            \def\svgwidth{.7\columnwidth}
            \import{Diagrams}{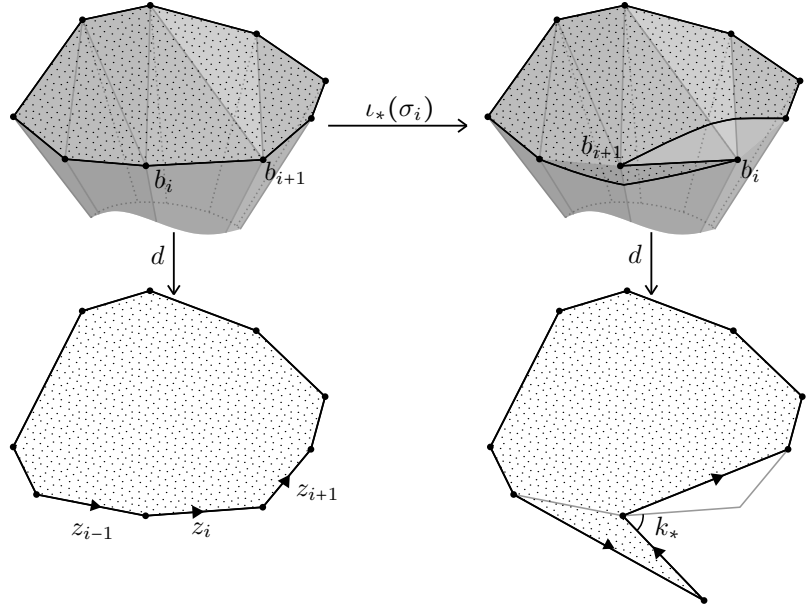}
            \caption{The action of $\iota_*(\sigma_i)$ on the disk in $S_{0,m}$, indicated with polka dots, is depicted at top. At bottom is the effect on the developing image, with the outline of the original disk superimposed at right for comparison.}
            \label{fig:polyhedra_computation}
        \end{figure}
    
        So writing $q=\exp(i(\pi-k_*))$, we see that the coordinates $z_1,\dotsc,z_{m-2}$ are transformed linearly via the matrix
        \begin{equation*}
            \rho_{\vec k}(\iota_*(\sigma_i)) = I_{i-2} \oplus \begin{pmatrix}
                1 & 0 & 0 \\
                -q & q & 1\\
                0 & 0 & 1
            \end{pmatrix}
            \oplus I_{m-i-3}
        \end{equation*}
        for $1 < i < n-1$, with appropriate modifications for the cases $i=1$ and $i=n-1$ as necessary. Evidently this is the image of the Burau representation $\beta_n(\sigma_i)$ after our modified evaluation map $\ev(-q)$. Since the diagram commutes when applying the various maps to any $\sigma_i \in B_n$ and the $\sigma_i$ generate the braid group, we have the desired result.
    \end{proof}

    In the case $m=n+1$, the evaluation map $\ev(-q)$ is truly an evaluation of the Burau representation (rather than the evaluation of the affine extension of the representation). This is the basis for \cref{cor:commutative_diagram_for_specialization}.

    \begin{proof}[Proof of \cref{cor:commutative_diagram_for_specialization}]
        In the case $m=n+1$, \cref{rmk:factors_through_evaluation} tells us that the commutative diagram in \cref{thm:commutative_diagram} factors as
        \begin{center}
            \begin{tikzcd}
            B_n \arrow[dd, "\iota_*"'] \arrow[r, "\beta_n"] \arrow[rrd, "\beta(-q)"']   &   \beta_n(B_n) \arrow[r, "\subset", phantom]  &   [-20pt]{\GL_{n-1}(\mathbb Z[t^\pm])} \arrow[d, "t \mapsto -q"] \\
            &       &    \GL_{n-1}(\mathbb C) \arrow[d, two heads]        \\
            {\Mod(S_{0,n+1};\vec k)} \arrow[r, "\rho_{\vec k}"]                                       & {\operatorname{PU}(1,n-2)} \arrow[r, "\subset", phantom] & \operatorname{PGL}_{n-1}(\mathbb C)
        \end{tikzcd}
        \end{center}
        which gives the commutative diagram in the statement of the corollary. Thus an element of $\ker (\beta(-q))$ must lie in the kernel of the composite map
        \begin{equation*}
            B_n \xrightarrow{\iota_*} \Mod(S_{0,n+1};\vec k) \xrightarrow{\rho_{\vec k}} \PU(1,n-2). 
        \end{equation*}
        The inclusion of the punctured disk $\iota:D_n \hookrightarrow S_{0,n+1}$ induces a surjective map of mapping class groups $\iota_*:B_n \twoheadrightarrow \Mod(S_{0,n+1};\vec k)$ whose kernel is the central subgroup $\langle \tau_n \rangle$ generated by the full twist braid. (see e.g. \cite[Proposition 3.19]{Farb2012AGroups}). If $\ker (\rho_{\vec k}) = \ncl_{\Mod(S_{0,n+1};\vec k)}(S)$, then a product of conjugates of elements of $S \subset \Mod(S_{0,n+1};\vec k)$ lifts to a product of conjugates of elements of $\tilde S \subset B_n$.  Lifts differ by the kernel of $\iota_*$, so the kernel of the composition $\rho_{\vec k} \circ \iota_*$ equals $\ncl_{B_n}(\tilde S,\tau_n) = \ncl_{B_n}(\tilde S) \cdot \langle \tau_n \rangle$, the last equality following because the full twist $\tau_n$ is central in the braid group.
    \end{proof}
    
\section{The Completion of Moduli Space and Orbifolds}\label{sec:Completion of M}
    In this section we gather several results that we will leverage in combination with \cref{cor:commutative_diagram_for_specialization} to restrict the kernel of the Burau representation. First, we recall Thurston's results about the metric completion of the moduli space of Euclidean cone metrics. Then we gather facts about orbifold fundamental groups and the monodromy representations of geometric orbifolds. Orbifold structures will allow us to exactly identify the kernels of the representations $\rho_{\vec k}$.

    \Cref{lemma:moduli_coordinates} says that $\mathcal{M}(\vec k)$ is a complex projective orbifold, yielding a monodromy representation $\rho_{\vec k}:\Mod(S_{0,m};\vec k) \to \PGL_{m-2}(\mathbb C)$. But in fact there is an extra piece of information we have, a hermitian form on the complex coordinate space whose diagonal part $A:\mathbb{C}^{m-2} \to \mathbb R$ simply measures the area of a Euclidean cone sphere whose developing map (before scaling) yields the coordinates $z_1,\dotsc,z_{m-2} \in \mathbb C$. Since acting by mapping classes only changes the choice of disk in a Euclidean cone sphere $X$ and not the underlying geometry of $X$, we see that our monodromy representation lands in the unitary group of the area form, i.e. we have $\rho_{\vec k}:\Mod(S_{0,m}; 
    \vec k) \to \operatorname{PU}(A)$. Thurston gave a geometric method to find the signature of this form:
    
    \begin{lemma}[{\cite[Proposition 3.3]{Thurston1998ShapesSphere}}] \label{lemma:hyperbolic_signature}
        The area form $A$ has signature $(1,m-3)$.
    \end{lemma}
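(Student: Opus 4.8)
Let me sketch the claim I need to prove.

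The area form $A: \mathbb{C}^{m-2} \to \mathbb{R}$ assigns to each coordinate tuple $(z_1, \dots, z_{m-2})$ the area of the Euclidean cone sphere whose developing map produces those side-length differences. I need to show that, as a Hermitian form, $A$ has exactly one positive eigenvalue and $m-3$ negative eigenvalues.

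=== MY PLAN ===

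\textbf{Approach.} The plan is to compute the area directly from the developing data and recognize it as a Hermitian form, then determine its signature by exhibiting explicit subspaces on which it is definite. The key conceptual point is that area is a \emph{real-analytic, scale-homogeneous} function of the coordinates — specifically it should be homogeneous of degree $2$ in the $z_i$ (doubling all the $z_i$ doubles lengths and quadruples area), which is exactly the homogeneity of a Hermitian form $A(z) = \sum_{j,k} h_{jk} z_j \bar z_k$. So the first step is to write down the area of the developed polygon/cone-sphere as such a quadratic expression and read off the matrix $(h_{jk})$.

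\textbf{Key steps, in order.} First I would recall that the cone sphere is assembled from the developing image of the disk $D$ by the Schwarz--Christoffel/folding procedure: the boundary polygon with vertices $d(b_1), \dots, d(b_{m-1})$ (edges $z_i$) gets folded up, and its signed area is computed by the standard shoelace formula, which is a real bilinear expression $\tfrac{1}{2}\operatorname{Im}\sum_i \bar w_i w_{i+1}$ in the partial sums $w_i = \sum_{j<i} z_j$. Substituting the partial sums back in terms of the $z_j$ yields a Hermitian form in $(z_1, \dots, z_{m-2})$; I would record its matrix explicitly. Second, I would verify that this form is genuinely nondegenerate (the area form is a legitimate pseudo-metric on the $(m-3)$-dimensional moduli space after projectivizing, so the cone-metric area cannot be degenerate on a complement of the scaling direction). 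Third, and most importantly, I would pin down the signature. The cleanest route is Thurston's geometric one: a point of $\mathcal{M}(\vec k)$ where $A > 0$ corresponds to an honest nonnegatively-curved cone sphere of positive area, and these fill out an open cone, giving at least one positive direction; meanwhile, degenerations where some cone points collide correspond to the negative directions, and dimension-counting on the $(m-3)$-dimensional moduli space forces exactly $m-3$ of them. I would make this rigorous by choosing a convenient special configuration (e.g. nearly-degenerate polygons, or a basis adapted to collapsing successive triangles) and computing the sign of $A$ restricted to each coordinate axis, showing one positive and $m-3$ negative signs.

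\textbf{Main obstacle.} The delicate part is \emph{not} writing area as a Hermitian form — that is essentially the shoelace computation — but rigorously certifying that exactly one eigenvalue is positive. The shoelace matrix is not diagonal, so I cannot just read off signs; I would need either a clean change of basis diagonalizing the form, or Thurston's global geometric argument that the locus $\{A > 0\}$ is precisely the projective model of genuine cone metrics (complex hyperbolic space $\mathbb{CH}^{m-3}$), which has complex dimension $m-3$ and so corresponds to a $(1, m-3)$ form by definition of $\operatorname{PU}(1, m-3)$. I expect the honest work is reconciling the explicit shoelace matrix with this geometric picture, i.e.\ confirming that the subspace where area is positive really is one-complex-dimensional over the projectivization. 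The cleanest proof likely avoids the explicit matrix entirely and instead argues: any fixed cone metric in $\mathcal{M}(\vec k)$ has positive area, so $A$ is positive somewhere; if $A$ had signature $(p, m-2-p)$ with $p \geq 2$, the positive-definite locus would have complex dimension $\geq 2$ after projectivizing, contradicting the fact that nearby cone metrics form a space of complex dimension $m-3$ on which area varies with a \emph{single} sign near each point. I would carry out this contradiction argument carefully, as it is where the geometry (rather than bare linear algebra) does the essential work.
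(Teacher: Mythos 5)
Your reduction of the lemma to linear algebra is fine, and it matches the standard treatment (Thurston's \S 3, or Schwartz's notes): area is real-analytic, homogeneous of degree two under complex scaling, and a sum of shoelace contributions over a triangulation with vertices at the cone points, hence a Hermitian form in $(z_1,\dots,z_{m-2})$. The gap is exactly in the step you yourself flag as the essential one. The proposed contradiction --- that if the signature were $(p,m-2-p)$ with $p\ge 2$, the projectivized positive locus would have complex dimension $\ge 2$, contradicting that cone metrics form an $(m-3)$-dimensional family --- does not work: for \emph{every} $p\ge 1$ the set $\{A>0\}$ is a nonempty open cone, so its projectivization is an open, hence full $(m-3)$-dimensional, subset of $\mathbb{CP}^{m-3}$. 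Openness and dimension of the locus of genuine cone metrics are therefore consistent with any $p\ge 1$ and cannot isolate $p=1$. What detects $p$ is the maximal dimension of a \emph{linear} subspace on which $A$ is positive definite; the locus of genuine metrics is not a linear subspace, and nothing in your plan bounds that maximal dimension by $1$. The fallback of reading off signs of $A$ on coordinate axes fails for the reason you concede: diagonal entries determine the signature only in an $A$-orthogonal basis. Your nondegeneracy step is also circular as stated, since it presupposes the pseudo-metric structure that the lemma is meant to establish.

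For comparison: the paper itself offers no proof but cites Thurston's Proposition 3.3, and Thurston's argument is precisely the idea you mention in passing (``a basis adapted to collapsing successive triangles'') but never develop: induction on $m$. For $m=3$ the coordinate space is $\mathbb{C}$, every nonzero vector is realized by a genuine cone metric, so $A>0$ there and the signature is $(1,0)$. For the inductive step, pick two cone points with $k_1+k_2<2\pi$ and consider the deformation that splits a single cone point of curvature $k_1+k_2$ into two cone points of curvatures $k_1,k_2$ joined by a short segment of length $\varepsilon$; a direct computation (cutting a small triangle off the tip of the cone) shows that splitting strictly \emph{decreases} area, by a positive multiple of $\varepsilon^2$, and that this splitting line is $A$-orthogonal to the deformations of the merged metric. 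This yields an orthogonal decomposition $A \cong A' \oplus (\text{negative line})$, where $A'$ is the area form for $m-1$ cone points, so by induction the signature is $(1,(m-1)-3)$ plus $(0,1)$, i.e. $(1,m-3)$. If you want to salvage your write-up, replace the dimension count with this induction; the two assertions that genuinely require proof are the sign of $A$ on the splitting line and its $A$-orthogonality to the merged deformations.
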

    
    And so $\operatorname{PU}(A) \approx \operatorname{PU}(1,m-3)$ is the group of holomorphic isometries of complex hyperbolic space $\mathbb{CH}^{m-3}$. Therefore $\mathcal{M}(\vec k)$ is in fact a \emph{complex hyperbolic} orbifold. As Thurston explains, the complex hyperbolic metric is not complete. The metric completion $\overline{\mathcal M}(\vec k)$ is obtained by adding several strata: lower-dimensional moduli spaces corresponding to the collision of groups of cone points whose curvatures sum to less than $2\pi$. Around each (real) codimension-2 stratum where two cone points collide there is a cone angle in the complex hyperbolic metric. Thurston examines the geometry of these strata.
    
    \begin{lemma}[{\cite[Proposition 3.5]{Thurston1998ShapesSphere}}] \label{lemma:cone_angles}
        The cone angle around a codimension-2 stratum of $\overline{\mathcal{M}}(\vec k)$ where cone points of curvature $k_1$ and $k_2$ collide is
        \begin{enumerate}
            \item $\pi-k_*$ when $k_*=k_1=k_2$, and
            \item $2\pi - (k_1+k_2)$ when $k_1 \ne k_2$.
        \end{enumerate}
    \end{lemma}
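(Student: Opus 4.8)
The plan is to read off the cone angle directly from the rotational holonomy of the monodromy representation $\rho_{\vec k}$ around a small loop encircling the stratum, reusing the developing-map computation from the proof of \cref{thm:commutative_diagram}. First I would set up the transverse geometry. The codimension-$2$ stratum where the two cone points of curvatures $k_1,k_2$ collide is itself a copy of a lower-dimensional moduli space $\mathcal{M}(\vec{k}')$, where $\vec{k}'$ replaces the pair $(k_1,k_2)$ by a single cone point of curvature $k_1+k_2$; the collision requires $k_1+k_2 < 2\pi$, so this is a genuine cone point and the stratum has complex codimension $1$. Using the coordinates of \cref{subsec:coordinates}, with the two colliding points adjacent on $\partial D$, a holomorphic coordinate transverse to the stratum is the developing-image separation $z = d(b_2)-d(b_1)$, which tends to $0$ on the stratum. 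Since the stratum $\{z=0\}$ is invariant under the monodromy, the monodromy acts on the normal direction by a single complex eigenvalue $e^{i\theta}$, and the content of the lemma is that the cone angle of the (incomplete) complex hyperbolic metric equals $\theta$. I would justify this identification by noting that near the stratum the developing map restricted to a transverse disk, together with the deck transformation given by the encircling monodromy, exhibits the completed transverse slice as the quotient of a developed Euclidean wedge by rotation through $\theta$, i.e. a Euclidean cone of angle $\theta$, which is exactly the local structure Thurston extracts from the degeneration of the area form.

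Next I would identify the encircling loop as a mapping class and split into the two cases. When $k_1 \neq k_2$ the two points are not interchangeable, so the generator of the fundamental group of a punctured transverse disk is the \emph{full} twist $\sigma^2$ (the Dehn twist about a curve enclosing both points), which returns each point to itself. When $k_1 = k_2 = k_*$ the points may be permuted, so the \emph{half} twist $\sigma$ already closes up a loop in moduli space and is the generator of the transverse $\pi_1$, while the full twist winds twice. This is precisely the source of the factor of two between the two cases.

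Finally I would compute the rotational holonomy directly from the developing picture used in the proof of \cref{thm:commutative_diagram}. A single counterclockwise half-twist that pivots the arc between the two points around a cone point of curvature $\kappa$ multiplies $z$ by $-e^{-i\kappa} = e^{i(\pi-\kappa)}$. In case (1) the encircling loop is this half-twist with $\kappa = k_*$, giving normal eigenvalue $e^{i(\pi-k_*)}$ and cone angle $\pi-k_*$. In case (2) the encircling loop is the full twist, realized as two successive half-twists that pivot around the two points in turn, with curvatures $k_1$ then $k_2$ since the first half-twist exchanges which point occupies the pivot position; composing gives $(-e^{-ik_1})(-e^{-ik_2}) = e^{i(2\pi-(k_1+k_2))}$ and hence cone angle $2\pi-(k_1+k_2)$. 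The inequalities $k_* < \pi$ and $k_1+k_2 < 2\pi$ guarantee that both angles lie in $(0,2\pi)$, as befits genuine cone singularities.

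The main obstacle I anticipate is the rigorous justification of the identification in the first step: that the cone angle of the complex hyperbolic metric equals the argument of the normal monodromy eigenvalue, rather than being distorted by the curvature of the ambient metric. Making this precise requires controlling the local model of the incomplete complex hyperbolic metric near the degenerating stratum, concretely showing that the normal slice is asymptotically a flat Euclidean cone as $z \to 0$, so that the holonomy rotation is literally the cone angle. This is where Thurston's hands-on analysis of the behavior of the area form as the cone points merge does the real work; the monodromy computation above only pins down the rotation angle once that local conical structure is in hand.
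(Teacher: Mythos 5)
The paper itself offers no proof of \cref{lemma:cone_angles}: it is imported verbatim from \cite[Proposition 3.5]{Thurston1998ShapesSphere}, so your proposal has to be judged against Thurston's argument rather than anything internal to this text. Your route is essentially that argument recast through the monodromy representation, and it is consistent with the computation the paper does carry out in the proof of \cref{thm:commutative_diagram}: in the coordinates of \cref{subsec:coordinates}, the half twist multiplies the separation coordinate $z$ by $e^{i(\pi-k_*)}$, two successive half twists pivoting about the points of curvature $k_1$ and then $k_2$ multiply it by $e^{i(2\pi-(k_1+k_2))}$, and your identification of the generator of the transverse $\pi_1$ (half twist when the colliding points are interchangeable, full twist otherwise) is exactly the mechanism producing the factor of two between the two formulas. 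Thurston's own phrasing of the same geometry is more economical and avoids the monodromy detour entirely: a resolution of the merged cone point at a fixed small scale is parameterized by the direction of the separation vector, which lives in the space of directions at a cone point of curvature $k_1+k_2$, a circle of circumference $2\pi-(k_1+k_2)$; when the two points are interchangeable, antipodal directions give the same configuration and the circumference halves to $\pi-k_*$.

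The caveat in your final paragraph is the genuine crux, and you are right to flag it rather than wave it away: a holonomy eigenvalue $e^{i\theta}$ determines the cone angle only modulo $2\pi$, whereas a cone angle is an honest positive real number, so the eigenvalue computation alone cannot distinguish $\pi-k_*$ from $\pi-k_*+2\pi$. What removes the ambiguity is tracking the rotation as a real number along the continuous isotopy realizing the twist (each half twist contributes exactly $\pi-\kappa$, where $\kappa$ is the curvature of the pivot point, with no extra winding) together with the fact that the transverse Teichm\"uller slice is simply connected, so the completed transverse slice in moduli space is the quotient of a metrically conical punctured disk by rotation through that real angle. Establishing that the transverse slice really is conical as $z \to 0$ (your ``asymptotically flat Euclidean cone'' step) is precisely where Thurston's local analysis of the degenerating metric lives; your outline assigns this work to the right place, but as written the proposal does not discharge it, so on its own it is an argument sketch rather than a complete proof.
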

    
    In general, the completion of the moduli space of Euclidean cone metrics has the structure of a complex hyperbolic \textit{cone manifold}. We won't go into the particulars of cone manifolds, but we note that in certain cases the completed moduli space is actually a complex hyperbolic \textit{orbifold}.
    
    \begin{lemma}[{\cite[Theorem 4.1]{Thurston1998ShapesSphere}}] \label{lemma:orbifold_conditions}
        If the cone angles around all codimension-2 strata of $\overline{\mathcal M}(\vec k)$ are integral submultiples of $2\pi$, then the metric completion $\overline{\mathcal M}(\vec k)$ is a complex hyperbolic orbifold. A codimension-2 stratum with cone angle $2\pi/d$ is an orbifold stratum of order $d$.
    \end{lemma}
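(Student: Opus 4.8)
The plan is to produce, near each point of $\overline{\mathcal M}(\vec k)$, an \emph{orbifold chart}: a neighborhood isometric to $U/\Gamma$ for some open $U \subset \mathbb{CH}^{m-3}$ and some finite group $\Gamma \leq \PU(1,m-3)$ of holomorphic isometries. Points of the open stratum $\mathcal M(\vec k)$ already carry such charts by \cref{lemma:moduli_coordinates} together with the complex hyperbolic structure from \cref{lemma:hyperbolic_signature}, so the entire content is to build charts transverse to the strata added in the completion. I would argue by induction on the complex dimension $m-3$. The base case $m-3 \leq 1$ is classical: a one-complex-dimensional complex hyperbolic cone manifold is a hyperbolic surface, and prescribing cone angles $2\pi/d$ makes it a hyperbolic $2$-orbifold with cone points of order $d$.

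I would begin the local analysis at a generic point $p$ of a real-codimension-$2$ stratum, where exactly one pair of cone points collides and no deeper degeneration occurs. There the completed metric has a cone singularity of some angle $\theta$ along the complex hyperplane $H \subset \mathbb{CH}^{m-3}$ cut out by the vanishing of the transverse developing coordinate of \cref{subsec:coordinates}, and $\theta$ is computed by \cref{lemma:cone_angles}. The holonomy of a small loop encircling the stratum is then a complex reflection fixing $H$ and rotating the normal complex line through $\theta$. When $\theta = 2\pi/d$ this reflection $R$ has exact order $d$, so $\langle R \rangle \cong \mathbb Z/d$ acts on $\mathbb{CH}^{m-3}$ with fixed locus $H$, and a neighborhood of $p$ is isometric to an open set in $\mathbb{CH}^{m-3}/\langle R \rangle$. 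This exhibits the stratum as an orbifold stratum with local group $\mathbb Z/d$, establishing the order-$d$ claim.

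For a deeper stratum, where a cluster (or several clusters) of cone points coalesces, the geometry normal to the stratum is controlled by the configuration of each colliding cluster, which is itself recorded by a moduli space of Euclidean cone metrics on a sphere with strictly fewer marked points (the cluster points together with one extra point recording the direction back to the rest of the surface). Its metric completion is a lower-dimensional instance of the same problem, and its codimension-$2$ strata correspond to the collisions internal to the cluster, so their cone angles form a subset of those of $\overline{\mathcal M}(\vec k)$ and remain integral submultiples of $2\pi$. By the inductive hypothesis this lower-dimensional completion is a complex hyperbolic orbifold with a finite local group at the relevant point; transporting that finite group to the normal directions and combining it with the totally geodesic stratum itself produces an orbifold chart of the required form. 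Concretely, the local group at a point of a codimension-$2c$ stratum is the finite group generated by the complex reflections attached to the codimension-$2$ strata limiting to it.

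The hard part will be making the normal-geometry description of the previous paragraph precise: one must run a degeneration argument showing that as a cluster of cone points coalesces the rescaled metrics converge to a point of the smaller moduli space, and that the transverse complex hyperbolic cone structure is exactly the one inherited from that smaller completion. Once this local model is in hand, finiteness of the local orbifold group is where the integrality hypothesis does its real work, and by the induction it reduces entirely to the codimension-$2$ cone-angle condition, which is precisely why it suffices to impose the submultiple-of-$2\pi$ condition only in real codimension $2$. The global assembly is then comparatively routine, since every chart develops into the same $\mathbb{CH}^{m-3}$ carrying the holonomy $\rho_{\vec k}$, so all transition maps lie in $\PU(1,m-3)$ and the charts glue into a complex hyperbolic orbifold structure; the added strata have complex codimension at least one, and the structure extends across them through the local models just described.
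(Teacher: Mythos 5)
You should first know that the paper contains no proof of this statement at all: it is imported verbatim from Thurston (his Theorem~4.1), so your attempt has to be measured against Thurston's argument, of which your proposal is a reconstruction. Your charts on the smooth locus and at generic points of codimension-$2$ strata are fine and agree with the standard argument: the meridian holonomy is a complex reflection through the cone angle of \cref{lemma:cone_angles}, and when that angle is $2\pi/d$ the transverse metric cone is isometric to the quotient of a ball in $\mathbb{CH}^{m-3}$ by the order-$d$ reflection.

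The genuine gap is in the deeper strata, and it is not just the deferred degeneration argument you flag. When a cluster of $c\geq 3$ cone points with curvature sum $s<2\pi$ coalesces, the space of shapes of the cluster is the moduli space of cone metrics with curvatures $(k_{i_1},\dotsc,k_{i_c},\,4\pi-s)$, and the auxiliary curvature $4\pi-s$ exceeds $2\pi$. So this is \emph{not} ``a lower-dimensional instance of the same problem'': the area form restricted to the directions normal to the stratum is \emph{negative definite} (the stratum is itself a moduli space of signature $(1,m-c-2)$ inside signature $(1,m-3)$, so the $c-1$ normal complex directions absorb only negative directions), and the transverse geometry is elliptic --- Fubini--Study on the projectivized normal space --- rather than complex hyperbolic. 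This is precisely where finiteness of the local group comes from, and your induction as set up cannot deliver it. A group generated by complex reflections of finite order is typically infinite: indeed the global holonomy $\rho_{\vec k}(\Mod(S_{0,m};\vec k))$ is itself generated by finite-order complex reflections (images of half twists and of twists about pairs of points) and is an infinite lattice in $\operatorname{PU}(1,m-3)$, so ``the finite group generated by the complex reflections attached to the codimension-$2$ strata limiting to it'' is not a formal consequence of integrality at codimension $2$. Finiteness holds because the local group preserves a definite Hermitian form on the normal space, hence lies in a compact unitary group, and because the link of the stratum is a \emph{closed elliptic} cone manifold whose codimension-$2$ cone angles are among those of $\overline{\mathcal M}(\vec k)$: by an induction run through definite-signature geometry it is a good closed elliptic orbifold, its developing map identifies its orbifold universal cover with the compact model space, and therefore its orbifold fundamental group --- the local group of the chart --- is finite. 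So the induction must be formulated for cone manifolds modeled on elliptic geometry (for the links and cluster moduli spaces), not on complex hyperbolic geometry; as written, with every cluster moduli space treated as ``the same problem,'' the finiteness step fails and the proof does not close.
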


    \Cref{cor:commutative_diagram_for_specialization} allows us to understand the kernel of the specialized Burau representation because the monodromy representation $\rho_{\vec k}$ has a kernel that is quite explicit thanks to orbifold theory..
    
    First, the proof of \cite[Theorem 2.9]{Cooper2000Three-dimensionalCone-Manifolds} and the paragraph following it tell us how to find the orbifold fundamental group of the completed moduli space when this space has an orbifold structure:
    \begin{lemma}[{\cite[Theorem 2.9]{Cooper2000Three-dimensionalCone-Manifolds}}]\label{lemma:completion_kernel}
        Suppose the completed moduli space $\overline{\mathcal{M}}(\vec k)$ is a complex hyperbolic orbifold. Then the kernel of the map
        $$\Mod(S_{0,m};\vec k) = \pi_1^{orb}(\mathcal{M}(\vec k)) \twoheadrightarrow \pi_1^{orb}(\overline{\mathcal{M}}(\vec k))$$
        is the normal closure of powers of loops around the added codimension-2 strata of $\overline{\mathcal M}(\vec k)$, the power being the order of the given stratum.
    \end{lemma}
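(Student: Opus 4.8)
The plan is to prove this as a statement about orbifold fillings, via a Seifert--van Kampen argument: passing from $\mathcal{M}(\vec k)$ to its completion $\overline{\mathcal{M}}(\vec k)$ glues in orbifold tubular neighborhoods of the added codimension-2 strata, and I want to show that the only new relations this imposes on the orbifold fundamental group are that the $d$-th power of each meridian becomes trivial. First I would fix a connected codimension-2 stratum $S$ of order $d$ and describe an orbifold tubular neighborhood $N$ of it. Away from the deeper strata (which are real codimension $\geq 4$ in $\overline{\mathcal{M}}(\vec k)$, arising from the collision of three or more cone points), the orbifold chart for $N$ is modeled on $\mathbb{C}^{m-4} \times \left( \mathbb{C}/(\mathbb{Z}/d\mathbb{Z}) \right)$, with $S \cong \mathbb{C}^{m-4} \times \{0\}$ the fixed locus and $\mathbb{Z}/d\mathbb{Z}$ acting by rotation on the transverse $\mathbb{C}$; this is exactly the local form guaranteed by \cref{lemma:orbifold_conditions} once the cone angle equals $2\pi/d$.

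Second I would carry out the local fundamental group computation, which is the heart of the matter. The complement $N^{\ast} = N \setminus S$ is modeled on $\mathbb{C}^{m-4} \times \left( \mathbb{C}^{\ast}/(\mathbb{Z}/d\mathbb{Z}) \right)$, so $\pi_1(N^{\ast})$ is generated by a meridian $\mu$ encircling $S$ once (together with loops along $S$, which I can suppress by working in the simply connected local model). The inclusion $N^{\ast} \hookrightarrow N$ induces $\pi_1(N^{\ast}) \to \pi_1^{orb}(N) = \mathbb{Z}/d\mathbb{Z}$, which is surjective and sends $\mu$ to a generator; its kernel is the normal closure of $\mu^d$. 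In other words, filling in $S$ kills precisely $\mu^d$ and no smaller power.

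Third I would assemble the global statement. Writing $\overline{\mathcal{M}}(\vec k) = \mathcal{M}(\vec k) \cup \bigcup_i N_i$ with $N_i \cap \mathcal{M}(\vec k) = N_i^{\ast}$, the orbifold Seifert--van Kampen theorem presents $\pi_1^{orb}(\overline{\mathcal{M}}(\vec k))$ as the pushout of $\pi_1^{orb}(\mathcal{M}(\vec k))$ with the groups $\pi_1^{orb}(N_i)$, amalgamated over the $\pi_1(N_i^{\ast})$. Because each leg $\pi_1(N_i^{\ast}) \twoheadrightarrow \pi_1^{orb}(N_i)$ is surjective by step two, a short algebraic lemma on pushouts collapses the $\pi_1^{orb}(N_i)$ factors and identifies the pushout with a quotient of $\pi_1^{orb}(\mathcal{M}(\vec k))$: a pushout $G \ast_A H$ with $A \to H$ onto and $\ker(A \to H) = K$ is canonically isomorphic to $G/\ncl_G(\text{image of } K)$. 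Applied to each stratum, this yields $\pi_1^{orb}(\overline{\mathcal{M}}(\vec k)) \cong \pi_1^{orb}(\mathcal{M}(\vec k))/\ncl(\mu_i^{d_i})$, so the kernel is exactly $\ncl_{\Mod(S_{0,m};\vec k)}(\mu_i^{d_i})$, the normal closure of the $d_i$-th powers of the meridians. Surjectivity of the map itself is automatic, since a generic loop in any space misses a codimension-2 subset. (Because \cref{lemma:orbifold_conditions} makes $\overline{\mathcal{M}}(\vec k)$ a complete complex hyperbolic orbifold, it is developable and $\pi_1^{orb}(\overline{\mathcal{M}}(\vec k))$ is a lattice in $\operatorname{PU}(1,m-3)$, which is what lets us read this kernel off the monodromy $\rho_{\vec k}$.)

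The main obstacle I anticipate is the rigorous handling of orbifold transversality and of the deeper strata. One must know that loops and their null-homotopies can be homotoped to meet the singular locus only along the generic codimension-2 part and to avoid the codimension $\geq 4$ strata, so that the clean local model of step two applies and the deeper strata contribute no further relations. This is a dimension count — a $2$-disk meets the codimension-2 strata in isolated points and misses the codimension $\geq 4$ strata entirely — but it deserves care in the orbifold category. Establishing, or properly invoking, a Seifert--van Kampen theorem valid for the orbifold fundamental groups of these singular complex hyperbolic spaces, rather than for honest manifolds, is the other point where I would lean on the cone-manifold machinery of \cite{Cooper2000Three-dimensionalCone-Manifolds}.
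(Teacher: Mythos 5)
The paper does not actually prove this lemma: it is imported wholesale from the literature, the entire justification being the pointer to \cite[Theorem 2.9]{Cooper2000Three-dimensionalCone-Manifolds} ``and the paragraph following it.'' So your proposal cannot match the paper's argument --- there isn't one --- but what you have written is, in substance, the standard argument sitting behind that citation: an orbifold Seifert--van Kampen computation in which gluing in a neighborhood of a codimension-2 stratum of order $d$ imposes exactly the relation $\mu^d = 1$ on the meridian, because $\pi_1\bigl(\mathbb{C}^*/(\mathbb{Z}/d)\bigr) \cong \mathbb{Z} \to \pi_1^{orb}\bigl(\mathbb{C}/(\mathbb{Z}/d)\bigr) \cong \mathbb{Z}/d$ is onto with kernel generated by $\mu^d$, and a pushout along a surjective leg collapses to a quotient of the other factor. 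Your route buys self-containedness; the paper's route buys brevity and the license not to develop van Kampen machinery for orbifolds.

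Two places in your sketch need more than you give them. First, the passage from step two to step three: the added strata are themselves (completions of) lower-dimensional moduli spaces, so they are neither simply connected nor free of their own orbifold points. Hence $\pi_1^{orb}(N_i^*)$ is not generated by $\mu_i$ alone, and the statement you actually feed into the pushout --- that $\ker\bigl(\pi_1^{orb}(N_i^*) \to \pi_1^{orb}(N_i)\bigr) = \ncl(\mu_i^{d_i})$ for a neighborhood of the \emph{entire} stratum --- does not follow formally from the simply connected local model. It is true, but requires either iterating van Kampen chart-by-chart along the stratum, or comparing the two fibration exact sequences over $S_i$ (fiber $S^1$ versus fiber the orbifold disk) and using that the meridian generates a central copy of $\mathbb{Z}$ in $\ker\bigl(\pi_1^{orb}(N_i^*) \to \pi_1^{orb}(S_i)\bigr)$ since everything is complex-oriented; some such argument must be supplied. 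Second, as you yourself flag, one must check that the codimension $\geq 4$ strata contribute no further relations; your general-position count is the right mechanism and can be made rigorous in charts. Neither issue is fatal, and notably neither repair requires developability or the monodromy: your closing parenthetical drags in the content of \cref{lemma:orbifolds_faithful}, which is a separate statement, whereas the present lemma is purely about fundamental groups and holds by the van Kampen argument alone.
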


    And then as a special case of \cite[Theorem 2.26]{Cooper2000Three-dimensionalCone-Manifolds}, we have the following:
    \begin{lemma}[{\cite[Theorem 2.26]{Cooper2000Three-dimensionalCone-Manifolds}}]\label{lemma:orbifolds_faithful}
        Suppose the completed moduli space $\overline{\mathcal{M}}(\vec k)$ is a complex hyperbolic orbifold. Then the monodromy representation of the completed moduli space $\pi_1^{orb}(\overline{\mathcal{M}}(\vec k)) \to \PU(1,m-3)$ is faithful.
    \end{lemma}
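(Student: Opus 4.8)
The content of this lemma is the general principle that a \emph{complete} geometric orbifold modeled on a simply connected homogeneous space is developable, with holonomy embedding its orbifold fundamental group as a discrete group of deck transformations. My plan is to reduce the faithfulness assertion to this developability statement, since once we know $\overline{\mathcal{M}}(\vec k)$ is globally a quotient $\mathbb{CH}^{m-3}/\Gamma$ the faithfulness is essentially tautological. Throughout I would treat $\overline{\mathcal M}(\vec k)$ as a $(\PU(1,m-3),\mathbb{CH}^{m-3})$-orbifold: the complex projective coordinates of \cref{subsec:coordinates} together with the area form of \cref{lemma:hyperbolic_signature} equip $\mathcal M(\vec k)$, and hence by \cref{lemma:orbifold_conditions} the completion $\overline{\mathcal M}(\vec k)$, with a complex hyperbolic structure whose developing map and holonomy are the orbifold universal cover map and the monodromy $\pi_1^{orb}(\overline{\mathcal M}(\vec k)) \to \PU(1,m-3)$.

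First I would record that $\mathbb{CH}^{m-3}$ is a complete, simply connected model space on which $\PU(1,m-3)$ acts by holomorphic isometries, so the pair forms a geometry in the sense of $(G,X)$-structures. Next I would invoke the orbifold analogue of the classical theorem that a metrically complete $(G,X)$-structure on a manifold has developing map a covering onto $X$ (this is the heart of the cited \cite[Theorem 2.26]{Cooper2000Three-dimensionalCone-Manifolds}). Applied here, metric completeness of $\overline{\mathcal M}(\vec k)$ forces the developing map $\operatorname{dev}$ from the orbifold universal cover to be a covering onto $\mathbb{CH}^{m-3}$. Because the target is simply connected and $\operatorname{dev}$ is a local isometry, it must in fact be an isometry; the orbifold universal cover is therefore $\mathbb{CH}^{m-3}$ itself, and $\overline{\mathcal M}(\vec k)$ is a \emph{good} (developable) orbifold.

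With developability in hand, the deck group $\Gamma \le \PU(1,m-3)$ acts properly discontinuously on $\mathbb{CH}^{m-3}$ with quotient orbifold $\overline{\mathcal M}(\vec k) = \mathbb{CH}^{m-3}/\Gamma$. By the defining property of the orbifold fundamental group of a good orbifold, the holonomy is precisely the canonical isomorphism $\pi_1^{orb}(\overline{\mathcal M}(\vec k)) \xrightarrow{\;\sim\;} \Gamma$ onto the deck group, and an isomorphism is in particular injective. Hence the monodromy representation of the completed moduli space is faithful.

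The main obstacle is the developability step: proving that metric completeness forces the developing map to be a covering, with the orbifold/cone structure along the codimension-2 strata of \cref{lemma:cone_angles} handled correctly. One must verify that near each orbifold stratum the local structure genuinely matches the standard cyclic quotient model guaranteed by \cref{lemma:orbifold_conditions}, and that geodesics extend through the preimages of these strata so that both the completeness argument and the proper discontinuity of $\Gamma$ go through. This local analysis along the singular locus, rather than the formal deduction of faithfulness, is where the real work of \cite{Cooper2000Three-dimensionalCone-Manifolds} lies; in a self-contained treatment I would reconstruct the orbifold developing-map argument there, and otherwise simply cite it as the excerpt does.
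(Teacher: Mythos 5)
Your proposal is correct and takes essentially the same approach as the paper: the paper establishes this lemma purely by citing \cite[Theorem 2.26]{Cooper2000Three-dimensionalCone-Manifolds}, and your argument is exactly the content of that theorem --- a complete $(\PU(1,m-3),\mathbb{CH}^{m-3})$-orbifold is developable, hence isomorphic to $\mathbb{CH}^{m-3}/\Gamma$ with holonomy an isomorphism onto the deck group $\Gamma$, so the monodromy representation is faithful. The one thing to keep in mind, which you yourself flag, is that the substantive analytic step (completeness forcing the developing map from the orbifold universal cover to be an isometry onto $\mathbb{CH}^{m-3}$) resides in the cited reference, so as written your text is a correct unpacking of the citation rather than an independent proof.
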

    
\section{The Kernel of Burau at Roots of Unity}\label{sec:narrow kernel}
    In this section we explain how \cref{cor:commutative_diagram_for_specialization} and the results gathered in \cref{sec:Completion of M} can be used to identify the kernel of the Burau representation specialized at certain roots of unity. In particular, we will prove \cref{thm:Burau_roots_of_unity} and \cref{cor:Burau_4_kernel}.
    
    Our analysis uses the 94 choices of curvatures $\vec k$ enumerated by Thurston \cite{Thurston1998ShapesSphere} for which the completion of the moduli space of Euclidean cone metrics $\overline{\mathcal{M}}(\vec k)$ is a complex hyperbolic orbifold. In these cases, we know exactly the kernel of the monodromy representation $\rho_{\vec k}:\Mod(S_{0,m};\vec k) \to \operatorname{PU}(1,m-3)$. Searching Thurston's list of orbifolds allows us, via the commutative diagram of \cref{cor:commutative_diagram_for_specialization}, to restrict the kernel of the associated specialization of the Burau representation. First we demonstrate how our method applies in the $n=6$ case and verify it with a certain braid already known to lie in the kernel of $\beta_6$.

    \begin{example}[The case of $n=6$ and $d=4$] \label{ex:Burau_6}
        To obtain a containment on the kernel of the Burau representation, we appeal to a particular moduli space of cone structures. Consider the choice of curvatures $\vec k = 2\pi(1,1,1,1,1,1,2)/4$. The completion of the moduli space $\overline{\mathcal M}(\vec k)$ is formed by adding two codimension-2 stratum. One stratum corresponds to the collision of two cone points of curvature $2\pi/4$, and by \cref{lemma:cone_angles}, the cone angle around this stratum is $\pi - 2\pi(1/4)=2\pi/4$. The other stratum corresponds to the collision of a cone point of curvature $2\pi/4$ with the one of curvature $2\pi(2/4)$, and the cone angle here is again $2\pi-2\pi(1/4+2/4)=2\pi/4$. Since these angles are submultiples of $2\pi$, \cref{lemma:orbifold_conditions} says that $\overline{\mathcal M}(\vec k)$ is an orbifold. The added strata are orbifold strata both of order $4$.
        
        Now pick two mapping classes: $\sigma \in \Mod(S_{0,7};\vec k)$ that exchanges two cone points of equal curvature with a half-twist, and $\tau \in \Mod(S_{0,7};\vec k)$ that performs a full twist of a pair of points of distinct curvatures. Then $\sigma$ and $\tau$ represent the $\pi_1^{orb}$-conjugacy classes of loops around the added orbifold strata. \Cref{lemma:completion_kernel,lemma:orbifolds_faithful} give that the monodromy representation $\rho_{\vec k}:\Mod(S_{0,7};\vec k) \to \operatorname{PU}(1,4)$ has kernel exactly the normal subgroup $\ncl_{\Mod(S_{0,7};\vec k)}(\sigma^4,\tau^4)$. The evident inclusion of a 6-punctured disk $\iota:D_6 \hookrightarrow S_{0,7}$ induces a surjective map of mapping class groups $\iota_*:B_6 \twoheadrightarrow \Mod(S_{0,7};\vec k)$ whose kernel is the central subgroup $\langle \tau_6 \rangle$. (see e.g. \cite[Proposition 3.19]{Farb2012AGroups}).
        
        Conflating notation, the mapping classes $\sigma,\tau \in \Mod(S_{0,7};\vec k)$ lift to a half twist generator $\sigma \in B_6$ and a full twist on 5 strands $\tau_5 \in B_6$.
        \begin{figure}
            \centering
                \def\svgwidth{0.25\columnwidth}
                \import{Diagrams}{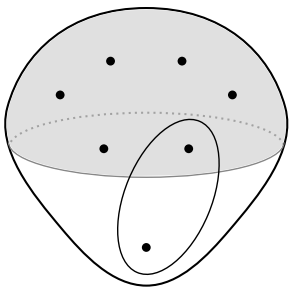}
                \caption{The same curve encircles either one puncture in the disk and the puncture outside of the disk or $n-1$ punctures in the disk. So the full twist $\tau \in \Mod(S_{0,n+1};\vec k$) about this curve lifts to the twist $\tau_{n-1} \in B_n$ about $n-1$ points.}
                \label{fig:the_last_twist_in_the_disk}
        \end{figure}
        See \cref{fig:the_last_twist_in_the_disk}. So by \cref{cor:commutative_diagram_for_specialization} we have
        \begin{equation}
            \ker (\beta(-q)) \leq \ncl_{B_6}(\sigma^4,\tau_5^4) \cdot \langle \tau_6 \rangle. \label{eqn:burau_6_containment}
        \end{equation}
        for $q=\exp(2\pi i/4)$ a primitive 4-th root of unity.

        Any braid in the kernel of the Burau representation $\beta_6$, before specialization, must also lie in the kernel of any specialization. So \eqref{eqn:burau_6_containment} also gives a containment on $\ker( \beta_6)$. There is in fact a not-too-complicated, nontrivial braid known to lie in the kernel of $\beta_6$, found by Bigelow \cite{Bigelow1999The5}. The containment \eqref{eqn:burau_6_containment} implies that this braid is trivial if we allow ourselves the extra relations that $\sigma^4=\tau_5^4=\tau_6=1$. See \cref{fig:commutator_reduction} for a computation verifying this conclusion. \qed
        
        \begin{figure}
                \centering
                \def\svgwidth{\columnwidth}
\begingroup%
  \makeatletter%
  \providecommand\color[2][]{%
    \errmessage{(Inkscape) Color is used for the text in Inkscape, but the package 'color.sty' is not loaded}%
    \renewcommand\color[2][]{}%
  }%
  \providecommand\transparent[1]{%
    \errmessage{(Inkscape) Transparency is used (non-zero) for the text in Inkscape, but the package 'transparent.sty' is not loaded}%
    \renewcommand\transparent[1]{}%
  }%
  \providecommand\rotatebox[2]{#2}%
  \newcommand*\fsize{\dimexpr\f@size pt\relax}%
  \newcommand*\lineheight[1]{\fontsize{\fsize}{#1\fsize}\selectfont}%
  \ifx\svgwidth\undefined%
    \setlength{\unitlength}{398.42361832bp}%
    \ifx\svgscale\undefined%
      \relax%
    \else%
      \setlength{\unitlength}{\unitlength * \real{\svgscale}}%
    \fi%
  \else%
    \setlength{\unitlength}{\svgwidth}%
  \fi%
  \global\let\svgwidth\undefined%
  \global\let\svgscale\undefined%
  \makeatother%
  \begin{picture}(1,0.52777092)%
    \lineheight{1}%
    \setlength\tabcolsep{0pt}%
    \put(0,0){\includegraphics[width=\unitlength,page=1]{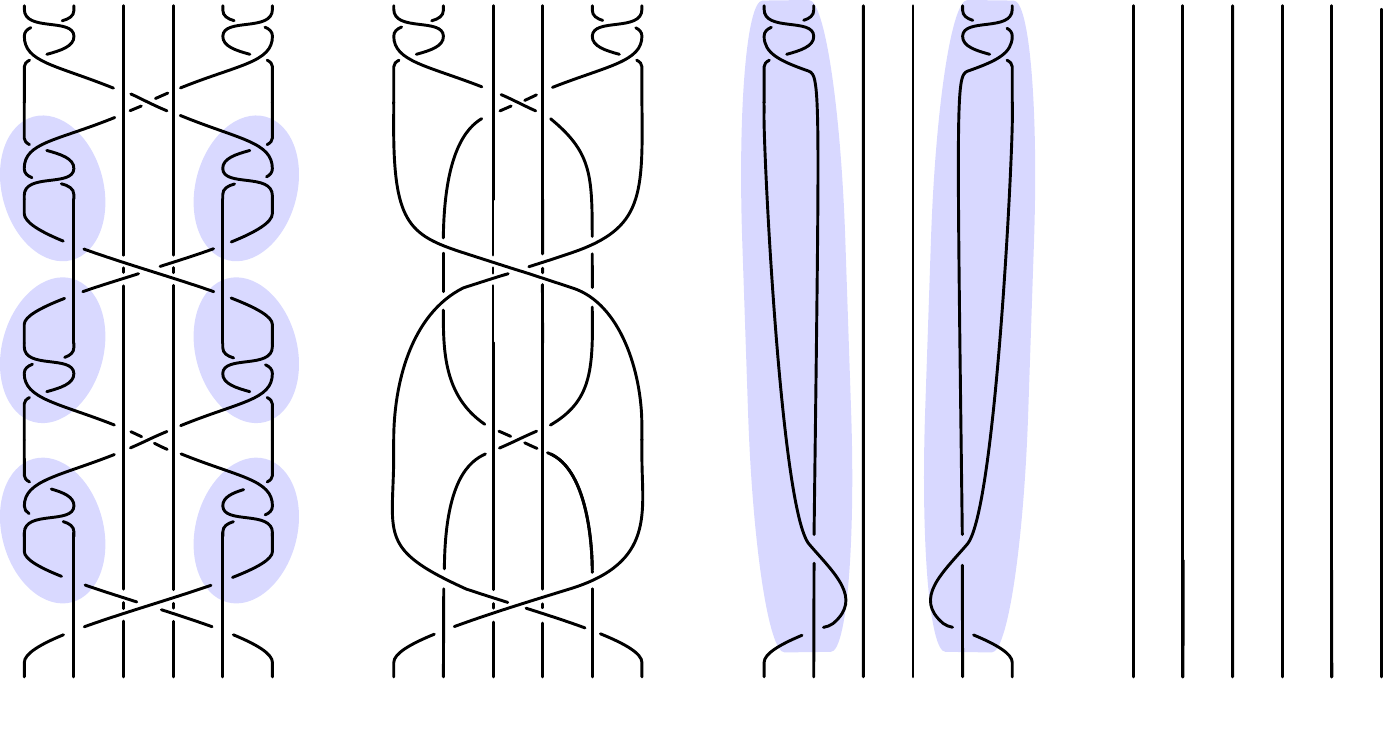}}%
    \put(0.09325896,0.00477959){\color[rgb]{0.03921569,0,0}\transparent{0.996503}\makebox(0,0)[lt]{\lineheight{1.25}\smash{\begin{tabular}[t]{l}(i)\end{tabular}}}}%
    \put(0.35845677,0.00477959){\color[rgb]{0.03921569,0,0}\transparent{0.996503}\makebox(0,0)[lt]{\lineheight{1.25}\smash{\begin{tabular}[t]{l}(ii)\end{tabular}}}}%
    \put(0.62119711,0.00477959){\color[rgb]{0.03921569,0,0}\transparent{0.996503}\makebox(0,0)[lt]{\lineheight{1.25}\smash{\begin{tabular}[t]{l}(iii)\end{tabular}}}}%
    \put(0.88710846,0.00477959){\color[rgb]{0.03921569,0,0}\transparent{0.996503}\makebox(0,0)[lt]{\lineheight{1.25}\smash{\begin{tabular}[t]{l}(iv)\end{tabular}}}}%
  \end{picture}%
\endgroup%

                \caption{Bigelow's 6-strand braid \cite[Figure 4]{Bigelow1999The5} is nontrivial in $B_6$, but it is trivial given the extra relation $\sigma^4=1$. We use this relation to replace $\sigma^3$ with $\sigma^{-1}$ in the shaded areas of Bigelow's braid in diagram (i), isotope diagram (ii), and trivialize instances of $\sigma^4$ in diagram (iii) to obtain a trivial braid (iv).}
                \label{fig:commutator_reduction}
        \end{figure}
    \end{example}

\subsection{Proof of the main theorem.}
    Following \cref{ex:Burau_6}, we have the geometric reasoning we need to prove the theorem under consideration in this section.

    \begin{proof}[Proof of \cref{thm:Burau_roots_of_unity}]
        First we note that one of the containments necessary to establish equation \eqref{eqn:kernel_in_special_cases} can be established by pure computation, which we now show. Again, we let $\sigma \in B_n$ denote one of the half-twist generators of the braid group, $\tau_{n-1} \in B_n$ a full twist on a curve surrounding $n-1$ points in the punctured disk, and $\tau_n \in B_n$ the full twist on the boundary of the disk. One can show that, up to conjugacy, we have
        \begin{center}
            $\beta(\sigma^d) = \begin{pmatrix}
                (-t)^d & 1-t+t^2-\dotsb + (-t)^{d-1} \\ 0 & 1
            \end{pmatrix} \oplus I_{n-3},$ \\
            $\beta(\tau_{n-1}^j) = \widetilde{t^{(n-1)j} \cdot I_{n-2}}, \text{ and } 
            \beta(\tau_n^\ell) = t^{n\ell} \cdot I_{n-1}$
        \end{center}
        where $\widetilde{(-)}:\GL_{n-2}(\mathbb Z[t^\pm]) \to \GL_{n-1}(\mathbb Z(t))$ is the affine extension defined in \cref{def:affine_extension}. Explicitly, we have
        \begin{equation*}
            \beta(\tau_{n-1}^j) = \left( \begin{array}{c|c}
                 t^{(n-1)j} & \frac{1-t^{(n-1)j}}{1-t^{n-1}} \begin{pmatrix} 1-t \\ \vdots \\ 1-t^{n-2} \end{pmatrix}  \\ \hline
                 0 & 1 
            \end{array} \right).
        \end{equation*}
        When $t$ is evaluated at $-q$ for $q \in \mathbb C$ a primitive $d$-th root of unity, it is evident that $\beta(\sigma^d)$ evaluates to the identity. For all of the cases $n,d,j$ indicated in the table of \cref{thm:Burau_roots_of_unity}, with $j < \infty$, one can verify that $-q$ is a $((n-1)j)$-th root of unity and not a $(n-1)$-th root of unity. Thus $\beta(\tau_{n-1}^j)$ evaluates to the identity in all of these cases.
        
        Finally, the matrix $\beta(\tau_n^\ell)$ evaluates to $(-q)^{n\ell} \cdot I_{n-1}$. The values $\ell$ indicated in the table are exactly given as $\ell = 2d/\gcd(2d,(d+2)n)$, which is the order of the unit complex number $(-q)^n$ in the multiplicative group of unit complex numbers. So $\beta(\tau_n^\ell)$ evaluates to the identity for all of the cases given in the table. Thus
        \begin{equation*}
            \ncl_{B_n}(\sigma^d,\tau_{n-1}^j) \cdot \langle \tau_n^\ell \rangle \leq \ker(\beta(-q)).
        \end{equation*}

        For the reverse inclusion, we appeal to the geometry of the moduli space of polyhedra and use the same strategy as \cref{ex:Burau_6}. The appendix of \cite{Thurston1998ShapesSphere} points us to several choices of curvatures $\vec k$ for which the completed moduli space $\overline{\mathcal M}(\vec k)$ is an orbifold. In the cases $\vec k = (k_1,\dotsc,k_n,k_{n+1})$ where $k_1=\dotsb=k_n=k_*$, \cref{cor:commutative_diagram_for_specialization} comes into play to give a restriction on $\ker(\beta(-q))$. Here we give our table again with an extra row giving the number of the relevant entry in Thurston's appendix.
        \begin{center}\begin{tabular}{c||cccccccc|cccc|cc|cc|c|c|c}
                $n$ & \multicolumn{8}{c|}{$4$}                                    & \multicolumn{4}{c|}{$5$}          & \multicolumn{2}{c|}{$6$} & \multicolumn{2}{c|}{$7$}  & $8$ & $9$ & $10$ \\ \hline
                $d$ & $5$ & $6$ & $7$ & $8$ & $9$ & $10$ & $12$ & $18$ & $4$ & $5$ & $6$ & $8$ & $4$ & $5$ & $3$ & $4$ & $3$ & $3$ & $3$  \\
                $j$ & $\infty$ & $\infty$ & $14$ & $8$ & $6$ & $5$  & $4$  & $3$  & $\infty$ & $5$ & $3$ & $2$ & $4$  & $2$ & $\infty$ & $2$ & $6$ & $3$ & $2$  \\
                $l$ & $5$ & $3$ & $7$ & $4$ & $9$ & $5$ & $3$ & $9$ & $4$ & $2$ & $3$ & $8$ & $2$ & $5$ & $6$ & $4$ & $3$ & $2$ & $3$ \\ \hline
                \# & $55.$ & $2.$ & $77.$ & $47.$ & $84.$ & $9.$ & $75.$ & $48.$ & $5.$ & $54.$ & $1.$ & $44.$ & $4.$ & $50.$ & $16.$ & $3.$ & $14.$ & $12.$ & $11.$
        \end{tabular}\end{center}

        For example, entry number 44 in Thurston's appendix gives $\vec k = 2\pi(3,3,3,3,3,1)/8$. One added orbifold stratum in the moduli space, corresponding to a half twist $\sigma$ on two points of the same curvature, has cone angle $\pi-2\pi(3/8) = 2\pi/8$. The other added orbifold stratum, corresponding to a full twist $\tau$ on two points of distinct curvature, has cone angle $2\pi-2\pi(3/8+1/8) = 2\pi/2$. So by \cref{lemma:completion_kernel,lemma:orbifolds_faithful}, the kernel of $\rho_{\vec k}:\Mod(S_{0,6};\vec k) \to \PU(1,3)$ equals $\ncl_{\Mod(S_{0,6};\vec k)}(\sigma^8, \tau^2)$. Since $\sigma$ lifts to a half twist generator $\sigma \in B_5$ and $\tau$ lifts to a full twist on 4 strands $\tau_4 \in B_5$, \cref{cor:commutative_diagram_for_specialization} gives
        \begin{equation*}
            \ker (\beta(-q)) \leq \ncl_{B_n}(\sigma^8,\tau_4^2) \cdot \langle \tau_5 \rangle
        \end{equation*}
        when $q=\exp(i(\pi-2\pi(3/8))) = \exp(2\pi i/8)$ is a primitive 8-th root of unity.
        
        Since conjugates of $\sigma^8$ and $\tau_4^2$ are already in the kernel of $\beta(-q)$ by computation and $\ell=2d/\gcd(2d,(d+2)n) = 8$ is the order of $(-q)^8$, which is the smallest power of $\tau_5$ in the kernel of $\beta(-q)$, we get the slightly stronger restriction
        \begin{equation*}
            \ker (\beta(-q)) \leq \ncl_{B_n}(\sigma^8,\tau_4^2) \cdot \langle \tau_5^8 \rangle.
        \end{equation*}
        This establishes the equality in \cref{thm:Burau_roots_of_unity}. The other cases with $j<\infty$ are similar.

        For the four cases in the table with $j=\infty$, one finds that $k_n+k_{n+1} \geq 2\pi$. In these cases, the completion $\overline{\mathcal M}(\vec k)$ has no stratum added corresponding to the mapping class $\tau$ and so $\ker (\rho_{\vec k})$ is just given as $\ncl_{\Mod(S_{0,n+1};\vec k)}(\sigma^d)$. The rest of the proof is the same.
    \end{proof}
    \begin{remark}
        For three of the four cases in the table with $j=\infty$, the image of $\tau_{n-1} \in B_n$ under $\beta(-q)$ in fact has infinite order. In other words, $\tau_{n-1}^j \notin \ker(\beta(-q))$ for any $j$. However, in the case $n=4$, $d=5$ we see that $\beta(-q)(\tau_3)$ has order 10. Our results thus imply that $\tau_3^{10} \in \ncl_{B_4}(\sigma^5,\tau_4^5 )$. This aligns with \cite[\S 11]{Coxeter1959FactorGroup}, which establishes the order of the central element in the group $B_3/\ncl(\sigma^5)$ (among similar results).
    \end{remark}
    \begin{remark}
        For the cases $(n,d)=(5,6),(7,4)$, and $(4,10)$, the relevant choice of curvatures for the above proof has all curvatures equal: $k_1=\dotsb = k_n=k_{n+1}$. In these cases, the relevant moduli space is a \textit{finite cover} of the moduli space considered by Thurston, the cover in which the $(n+1)$-st point is distinguished. One can check that in these cases the full twist on the $n$-th point and $(n+1)$-st point still corresponds to an orbifold stratum rather than just a cone stratum.
        
        In the same vein, the observant reader will find that we have neglected to use Thurston's entry \#10 which has $\vec k = 2\pi(1,1,\dotsc,1)/6$ with 12 cone points of equal curvature. This corresponds to the case $(n,d)=(11,3)$ in our context. However here, the cover of the moduli space that distinguishes the 12-th point is no longer an orbifold. The stratum corresponding to the twist $\tau$ has cone angle $2\pi-2\pi(1/6+1/6) = 2\pi (2/3)$. So we cannot say for certain what the kernel of the representation $\rho_{\vec k}$ is in this case.
    \end{remark}
    
    Thinking of the broader context here, recall that the $n=4$ case is the last remaining case in which we do not know whether the Burau representation is faithful. \Cref{thm:Burau_roots_of_unity} gives us the restriction on $\ker (\beta_4)$ of \cref{cor:Burau_4_kernel}.

    \begin{proof}[Proof of \cref{cor:Burau_4_kernel}.]
        Any element in the kernel of $\beta_4$, before specialization, is an element of the kernel of \textit{every} specialization. So $\ker (\beta_4)\leq \ker (\beta(-q))$ for any $q$. In particular, the eight entries of \cref{thm:Burau_roots_of_unity} with $n=4$ give restrictions on $\ker (\beta_4)$.

        For the statement about infinite index, note that the quotient $B_4/\ncl(\sigma^d,\tau_3^j,\tau_4)$ is the orbifold fundamental group of some $\overline{\mathcal M}(\vec k)$ in the cases of interest. Part of \cite[Theorem 0.2]{Thurston1998ShapesSphere} is that $\overline{\mathcal M}(\vec k)$ is a complex hyperbolic orbifold of \textit{finite volume}. Therefore, the orbifold fundamental group of this space must be infinite and so the map $B_4 \twoheadrightarrow \Mod(S_{0,5};\vec k) \twoheadrightarrow \pi_1^{orb}(\overline{\mathcal M}(\vec k))$ has a kernel of infinite index. The normal subgroup $\ncl_{B_4}(\sigma^d,\tau_3^j) \cdot \langle \tau_4^\ell \rangle$, with a higher power of $\tau_4$, is a subgroup of this kernel.
    \end{proof}

\subsection{Burau$_3$ at roots of unity}\label{subsec:Burau_3}
    Thurston's appendix does not list choices of curvatures $\vec k$ with 4 cone points because there are infinitely many such choices for which the completion $\overline{\mathcal M}(\vec k)$ is a $\mathbb{CH}^1 \approx \mathbb{RH}^2$ orbifold. In fact, there are infinitely many such $\vec k = (k_1,k_2,k_3,k_4)$ satisfying $k_1=k_2=k_3$, inviting us to include a 3-strand braid group. Using the same method as in the proof of \cref{thm:Burau_roots_of_unity}, we have the following.
    \begin{theorem}\label{thm:Burau_3_at_roots_of_unity}
        Let $q$ be a primitive $d$-th root of unity, $d \geq 7$. Then $$\ker(\beta_3(-q)) = \ncl_{B_3}(\sigma^d) \cdot \langle \tau_3^\ell \rangle$$ where $\ell = 2d/\gcd(12,d+6)$ is the order of the unit complex number $(-q)^3$.
    \end{theorem}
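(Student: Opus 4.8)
The plan is to run the argument of \cref{ex:Burau_6} and the proof of \cref{thm:Burau_roots_of_unity} in the one-dimensional setting $n=3$, $m=4$, supplying by hand the orbifold verification that Thurston's appendix omits for four cone points. First I would fix the curvatures $\vec k = (k_*,k_*,k_*,k_4)$ with $k_* = \pi(d-2)/d$, chosen precisely so that $q = \exp(i(\pi-k_*)) = \exp(2\pi i/d)$ is the prescribed primitive $d$-th root of unity; Gauss--Bonnet then forces $k_4 = 4\pi - 3k_* = \pi(d+6)/d$. The hypothesis $d \geq 7$ is exactly what makes $k_4 < 2\pi$, so that $\vec k$ is an admissible curvature tuple in the sense of \cref{thm:commutative_diagram} (the remaining inequalities $0 < k_* < 2\pi$ are automatic).

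Next I would identify the completion $\overline{\mathcal M}(\vec k)$ and its orbifold strata. Since $m=4$, the space $\mathcal M(\vec k)$ is a $\mathbb{CH}^1 \approx \mathbb{RH}^2$ orbifold, and its completion fills in the pairwise-collision strata. By \cref{lemma:cone_angles}(1) a collision of two of the equal points $k_*$ has cone angle $\pi - k_* = 2\pi/d$, an integral submultiple of $2\pi$ for every integer $d$, so by \cref{lemma:orbifold_conditions} this is an orbifold stratum of order $d$ associated to the half-twist $\sigma$. The decisive arithmetic is that no other stratum occurs: a $k_*$--$k_4$ collision would require $k_*+k_4 < 2\pi$, yet $k_*+k_4 = 2\pi(d+2)/d > 2\pi$, and a triple collision would require $3k_* < 2\pi$, yet $3k_* = 3\pi(d-2)/d \geq 2\pi$ for $d \geq 7$. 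Because the three equal points are interchangeable by mapping classes, the three possible two-point collisions give conjugate meridians, so \cref{lemma:completion_kernel} together with \cref{lemma:orbifolds_faithful} yields $\ker(\rho_{\vec k}) = \ncl_{\Mod(S_{0,4};\vec k)}(\sigma^d)$.

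Feeding this into \cref{cor:commutative_diagram_for_specialization} --- where $\sigma$ lifts to a half-twist generator of $B_3$ and $\ker(\iota_*) = \langle \tau_3\rangle$ --- gives the containment $\ker(\beta_3(-q)) \leq \ncl_{B_3}(\sigma^d)\cdot\langle \tau_3\rangle$. To upgrade this to the asserted equality I would reuse the matrix computations from the proof of \cref{thm:Burau_roots_of_unity}: at $t=-q$ the matrix $\beta(\sigma^d)$ evaluates to the identity, so $\ncl_{B_3}(\sigma^d) \leq \ker(\beta_3(-q))$, while $\beta(\tau_3^\ell)$ evaluates to $(-q)^{3\ell} I_2$. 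A short Euclidean-algorithm reduction $\gcd(3(d+2),2d) = \gcd(2d, d+6) = \gcd(12,d+6)$ shows the order of $(-q)^3$ is $\ell = 2d/\gcd(12,d+6)$, whence $\tau_3^\ell \in \ker(\beta_3(-q))$ and $\ncl_{B_3}(\sigma^d)\cdot\langle\tau_3^\ell\rangle \leq \ker(\beta_3(-q))$. For the reverse inclusion, write any $g \in \ker(\beta_3(-q))$ as $g = n\,\tau_3^k$ with $n \in \ncl_{B_3}(\sigma^d) \leq \ker(\beta_3(-q))$; then $\tau_3^k \in \ker(\beta_3(-q))$ forces $\ell \mid k$, so $g \in \ncl_{B_3}(\sigma^d)\cdot\langle\tau_3^\ell\rangle$, completing the equality.

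I expect the genuine obstacle to be the orbifold verification rather than the algebra. Because the four-point moduli spaces are absent from Thurston's tabulation, one must confirm directly that $\overline{\mathcal M}(\vec k)$ is a bona fide finite-volume $\mathbb{RH}^2$ orbifold (not merely a complex hyperbolic cone surface) and that its only added strata are the $\sigma$-type collisions. The two curvature inequalities above, both controlled by $d \geq 7$, are precisely what eliminate the unwanted collision strata; once they are in place, \cref{lemma:completion_kernel,lemma:orbifolds_faithful} apply verbatim and the rest is the same root-of-unity bookkeeping used for \cref{thm:Burau_roots_of_unity}.
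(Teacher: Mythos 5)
Your proposal is correct and follows essentially the same route as the paper: the same curvature choice $k_*=\pi-2\pi/d$, the same orbifold analysis (a single added stratum of cone angle $2\pi/d$ corresponding to $\sigma$, and no stratum for the mapping class $\tau$ since $k_*+k_4>2\pi$), and the same root-of-unity bookkeeping fed through \cref{cor:commutative_diagram_for_specialization}. In fact your value $k_4=\pi(d+6)/d$ is the Gauss--Bonnet-correct one (the paper's proof misprints it as $\pi+3\pi/d$), and your explicit checks --- admissibility exactly when $d\geq 7$, exclusion of triple collisions, and the reverse-inclusion argument pinning down the power $\tau_3^\ell$ --- are details the paper leaves implicit by citing the proof of \cref{thm:Burau_roots_of_unity}.
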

    \begin{proof}
        Take $k_1=k_2=k_3=\pi-2\pi/d$ and $k_4=\pi + 3\pi/d$. With $\vec k = (k_1,k_2,k_3,k_4)$, the same argument as in the proof of \cref{thm:Burau_roots_of_unity} gives the result. Namely, the completion of the moudli space $\overline{\mathcal M}(\vec k)$ has one orbifold stratum added, corresponding to a half twist $\sigma$, of cone angle $\pi-(\pi-2\pi/d) = 2\pi/d$. Note that in this case one has $k_3+k_4 > 2\pi$, so there is no added stratum corresponding to the mapping class $\tau$.
    \end{proof}
    
    This theorem corrects the result stated in \cite[Theorem 1.2]{Funar2014OnUnity}, which is incorrect in the cases when $d$ is a multiple of 3. I would like to note that the correct result was evident in the course of the arguments of that paper. Additionally, when stated in this form, it is clear that \cref{thm:Burau_3_at_roots_of_unity} combines with their result \cite[Theorem 2.1]{Funar2014OnUnity} to give a new proof of the faithfulness of $\beta_3$.
    
\section{Future Directions}\label{sec:conclusion}
    My original motivation for developing these results was to use Thurston's moduli space of polyhedra to place restrictions on the kernel of the Burau representation. Perhaps the restrictions we have found in \cref{cor:Burau_4_kernel} can be combined with other results about the kernel of $\beta_4$ to prove faithfulness of the representation, or to narrow down a search for a nontrivial element of the kernel. The connections with Squier's conjectures yield other questions and future directions.
    
    \subsection*{A mismatch between our results and Squier's conjecture.} Squier conjectured that the kernel of $\beta(-q)$ would be to equal $\ncl(\sigma^d)$ for any $n$ and for $q$ any $d$-th root of unity \cite[(C1)]{Squier1984TheUnitary}. From the results of \cite{Funar2014OnUnity} and the typical nonfaithfulness of Burau \cite{Bigelow1999The5}, we know that for any $n \geq 5$ this conjecture is false for almost all even values of $d$. The form we have acquired in \cref{thm:Burau_roots_of_unity} for the finitely many special cases of $\ker(\beta(-q))$ certainly looks more complicated than Squier's very simple normal generating set. It would be interesting to see in which cases the normal subgroup $\ncl(\sigma^d,\tau_{n-1}^j) \cdot \langle \tau_n^\ell \rangle$ is or is not equal to $\ncl(\sigma^d)$, i.e. to see when Squier's conjecture is correct as originally stated. Part of investigating this question could be a calculation along the lines of \cref{fig:commutator_reduction}.

    \subsection*{The case with more than \texorpdfstring{$n+1$}{n+1} cone points.} One restriction in this work is that we have only considered choices of curvatures $\vec k = (k_1,\dotsc,k_m)$ with $m=n+1$. There are two reasons for this. First, taking $m=n+1$ allows us to identify the kernel of the composite map
    \begin{equation*}
        B_n \xtwoheadrightarrow{\iota_*} \Mod(S_{0,n+1};\vec k) \xrightarrow{\rho_{\vec k}} \PU(1,n-2)
    \end{equation*}
    in terms of $\ker (\rho_{\vec k})$ because the map of mapping class groups $\iota_*$ is surjective. When $m > n+1$, the induced map on mapping class groups is injective (and not surjective) \cite[Theorem 3.18]{Farb2012AGroups}:
    \begin{equation*}
        B_n \xhookrightarrow{\iota_*} \Mod(S_{0,m};\vec k) \xrightarrow{\rho_{\vec k}} \PU(1,m-3)
    \end{equation*}
    but I was unable to rigorously identify the preimage of $\ker (\rho_{\vec k})$ in $B_n$ in this case. I would like to see how to identify the kernel in this case and whether it could give any more information about the Burau representation.

    Second, the condition $m=n+1$ tells us that the ``evaluation map'' in \cref{thm:commutative_diagram} is actually an evaluation, and so we can place the evaluation of the Burau representation in the commutative diagram as in \cref{cor:commutative_diagram_for_specialization}. When $m>n+1$, this is not the case. It is not hard to find braids $b \in B_n$ and roots of unity $q$ for which $\beta(b)|_{t=-q}$ is trivial while $\ev(-q)(\beta(b))$ is not. For instance, powers of the full twist braid exhibit this behavior. So it is not immediately clear to me how the moduli space of polyhedra could be used to place restrictions on $\ker(\beta(-q))$ aside from the cases explored in this paper.

    \subsection*{Similar results for the Gassner representation.} To get further mileage out of Thurston's list, one might consider the Gassner representation of the \textit{pure} braid group
    $$\mathcal{G}_n:PB_n \to \operatorname{GL}_{n-1}(\mathbb{Z}[t_1^\pm,\dotsc,t_n^\pm]),$$
    which specializes to the Burau representation by sending $t_i \mapsto t$ for all $i$. See \cite[Chapter 3]{Birman1975BraidsAM-82} for a construction. Since the Gassner representation is defined on a subgroup of the braid group and takes values in a larger matrix group, one might expect it to be ``more faithful'' than the Burau representation. Yet it is not known whether or not $\mathcal{G}_n$ is faithful for any value of $n \geq 4$. A version of  \cref{thm:commutative_diagram} for the Gassner representation exists, for instance in \cite{Venkataramana2014MonodromyLine} via an algebraic approach. A geometric or topological construction would be desirable along the lines of our \cref{thm:commutative_diagram}, though our method of proof would be much more cumbersome in this context due to the more complicated generating set of the pure braid group. Yet armed with a version of \cref{thm:commutative_diagram} and \cref{cor:commutative_diagram_for_specialization} for the Gassner representation, more of Thurston's 94 moduli spaces could help to identify the kernels of some specializations of various $\mathcal G_n$. Though finitely many restrictions on $\ker \mathcal G_n$ could never establish faithfulness alone (see \cite[Lemma 2.1]{Long1986AGroups}), perhaps this could shed some light on the faithfulness question for these representations.

    \subsection*{Relationship with some remarkable work of Coxeter.} In \cite{Coxeter1959FactorGroup}, Coxeter investigated the quotients of braid groups defined by $B_n(d) = B_n/\ncl(\sigma^d)$. For $n=2$, the quotient is a finite cyclic group. For $d=2$, the quotient is isomorphic to the symmetric group on $n$ letters. For all but five other choices of $(n,d)$, the quotient $B_n(d)$ is an infinite group. Coxeter established infiniteness in these cases using hyperbolic geometry, and I think this is along the lines of the argument for infinite index in the proof of \cref{cor:Burau_4_kernel}. The five sporadic cases of $(n,d)$ for which $B_n(d)$ is finite correspond to the Platonic solids, and Coxeter gave a remarkable formula for the order of $B_n(d)$ in terms of the combinatorics of the associated Platonic solid. Coxeter proved his formula by individually computing the orders of these five quotient groups and checking that the formula works in each case. I hope that the geometric perspective of the moduli spaces considered in this paper might give more insight into -- and perhaps a more revealing proof of -- Coxeter's result.

\bibliographystyle{amsalpha}    
\bibliography{references.bib}
\end{document}